\documentclass[10pt,reqno]{amsart}

\usepackage{amsmath,amssymb,amsfonts}

\usepackage{	amsthm}
\newtheorem{theorem}{Theorem}
\newtheorem{proposition}{Proposition}
\newtheorem{lemma}{Lemma}
\newtheorem{corollary}{Corollary}
\newtheorem{definition}{Definition}
\newtheorem{remark}{Remark}

\usepackage{mathrsfs}
\usepackage{enumerate}

\usepackage[bitstream-charter]{mathdesign}
\usepackage[T1]{fontenc}

\topmargin 0.25in \textheight 8.5in \flushbottom
\setlength{\textwidth}{6.in} % 8.5in - 1in margin on each side = 6.5in
\setlength{\oddsidemargin}{.25in} % gives 1+.25 in left margin, page 163
\setlength{\evensidemargin}{.25in}% gives 1+.25 in left margin, page 163

% please place your own definitions here and don't use \def but
\newcommand\bB{\mathbb{B}}
\newcommand\bE{\mathbb{E}}
\newcommand\bH{\mathbb{H}}
\newcommand\bL{\mathbb{L}}
\newcommand\bN{\mathbb{N}}
\newcommand\bP{\mathbb{P}}
\newcommand\bX{\mathbb{X}}
\newcommand\bZ{\mathbb{Z}}

\newcommand\fR{\mathbf{R}}
\newcommand\R{\mathbf{R}}
\newcommand\fN{\mathbf{N}}

\newcommand\cF{\mathcal{F}}
\newcommand\cP{\mathcal{P}}
\newcommand\cR{\mathcal{R}}
\newcommand\cS{\mathcal{S}}
\newcommand\cM{\mathcal{M}}
\newcommand\cO{\mathcal{O}}

\newcommand\rF{\mathscr{F}}

\newcommand{\bb}{\mathbf{b}}
\newcommand{\bu}{\mathbf{u}}
\newcommand{\bv}{\mathbf{v}}
\newcommand{\bw}{\mathbf{w}}

\newcommand{\p}{\partial}

\newcommand{\norm}[1]{\left\Vert#1\right\Vert}

\begin{document}
\baselineskip=18pt

\title[Stochastic MHD Equations]
{A well-posedness theory in Sobolev spaces for the stochastic magnetohydrodynamic equations in the whole space}

\author{Ildoo Kim \and Minsuk Yang}

\address{I. Kim: 
Department of mathematics, Korea University, 145 Anamro, Seongbukgu, Seoul, Republic of Korea}
\email{waldoo@korea.ac.kr}

\address{M. Yang: 
Department of Mathematics, Yonsei University, 50 Yonseiro Seodaemungu, Seoul, Republic of Korea}
\email{m.yang@yonsei.ac.kr}

\begin{abstract}
We prove the existence of a mild solution to the three dimensional incompressible stochastic magnetohydrodynamic equations in the whole space with the initial data which belong to the Sobolev spaces.\\
Keywords: stochastic partial differential equations; magnetohydrodynamic equations; mild solution\\
2010 MSC: 60H15; 35Q35
\end{abstract}

\maketitle

\section{Introduction}
\label{S1}

Magnetohydrodynamics is the branch of physics that studies the dynamics for electrically conducting fluids influenced by a magnetic field.
They are frequently generated in nature, for example, the sun, beneath the Earth's mantle, plasmas in space, liquid metals, and so on. 
For the background knowledge we refer the reader to Davidson's monograph \cite{Davidson}.
The aim of this paper is to establish the existence of a stochastic mild solution to the three dimensional incompressible magnetohydrodynamic (MHD) equations driven by stochastic external forces, which can be described as the following system of stochastic partial differential equations 
\begin{equation}
\label{E11}
\left\{
\begin{split}
&(\p_t - \Delta) \bu + \nabla\cdot(\bu\otimes\bu-\bb\otimes\bb) + \nabla \pi = g^k_1 \frac{dw^k}{dt} \\
&(\p_t - \Delta) \bb + \nabla\cdot(\bu\otimes\bb-\bb\otimes\bu) = g^k_2 \frac{dw^k}{dt} \\
&\nabla \cdot \bu = \nabla \cdot \bb = 0
\end{split}
\right.
\end{equation}
in $(0,T) \times \fR^3 $ with divergence-free initial vector fields $\bu_0$ and $\bb_0$, where $\bu$, $\bb$, $\pi$, and $\frac{dw^k}{dt}$ denote the velocity field, the magnetic field, the pressure of fluid, and  independent one-dimensional white noises $(k=1,2,\ldots)$, respectively.
We note that Einstein's summation convention is used throughout the paper. 

In the absence of magnetic field, the MHD equations reduce to the Navier--Stokes equations. 
There are huge literature about the theory of the Navier--Stokes equations.
For the deterministic Navier--Stokes equations, Fujita and Kato \cite{fujita1964navier} initiated the study for the existence of the mild solution with initial data in the critical Sobolev space $\dot{H}_2^{1/2}$.
Many mathematicians have been interested in its stochastic versions due to the complicate dynamics of fluid motions.
Naturally, there are many articles handling stochastic Naveri--Stokes equations extend the deterministic results.
However, there are only a few results for the stochastic MHD equations.
We refer the reader for background and history of these results to the introduction of \cite{Tan2016}, where the authors also considered  well-posedness of three-dimensional incompressible MHD equations with stochastic external forces. 

Before stating our main results, we motivate the definition of a mild solution and simplify the settings by introducing a few notations.
We denote by $\bP$ the Leray projection operator onto divergence-free vector fields.
In $\fR^3$ it can be expressed as
\[
\bP = I + \cR \otimes \cR,
\]
where $\cR = (\cR_1, \cR_2, \cR_3)$ denotes the Riesz transforms and $\left( (\cR \otimes \cR)\bu\right)^i = \sum_{j}\cR^i \cR^j u^j$.
By applying the Leray projection $\bP$ to \eqref{E11} formally, one can remove  the pressure term since $\bP \nabla \pi=0$.
By introducing new variables 
\begin{equation}
\label{E15}
\bv = \bu + \bb, \quad \bw = \bu - \bb,
\end{equation}
one can rewrite the equations \eqref{E11} as 
\begin{equation}
\label{pro eqn}
\left\{
\begin{split}
&(\p_t - \Delta) \bv =  - \bP\nabla\cdot(\bw\otimes\bv) + G^k_1 \frac{dw^k}{dt} \\
&(\p_t - \Delta) \bw = - \bP\nabla\cdot(\bv\otimes\bw) + G^k_2 \frac{dw^k}{dt} \\
&\nabla \cdot \bv = \nabla \cdot \bw = 0
\end{split}
\right.
\end{equation}
with the corresponding divergence-free initial data $\bv_0=\bu_0+\bb_0$ and $\bw_0=\bu_0-\bb_0$
and the stochastic external forces $G^k_1 = \bP g^k_1 + \bP g^k_2$ and $G^k_2 = \bP g^k_1 - \bP g^k_2$.

In order to neatly write the equations, we denote the heat semi-group by $S(t)=e^{t\Delta}$ and define the following bilinear map
\begin{equation}
\label{E14}
B(\bu,\bv)(t)=\int_0^t S(t-s) \bP \nabla \cdot (\bu \otimes \bv)(s) ds.
\end{equation}
Solving the heat equation by Duhamel's formula motivates
the definition of a mild solution of the stochastic MHD equations.
We say that $(\bv, \bw)$ is a (mild) solution to \eqref{pro eqn} on $(0,T)$ if it solves for $0\le t < T$ the integral equations
\begin{equation}
\label{E24}
\begin{split}
\bv & = S(t)\bv_0-B(\bw,\bv) +\int_0^t S(t-s) G^k_1(s)dw_s^k \\
\bw & = S(t)\bw_0 - B(\bv,\bw)+\int_0^t S(t-s) G^k_1(s)dw_s^k,
\end{split}
\end{equation}
where $w^k_t$ is the Brownian motion related to the white noisy $\frac{dw^k}{dt}$.

In \cite{Tan2016}, well-posedness for the stochastic MHD equations was studied for the initial data 
\[
\bv_0,\bw_0 \in  L_2 \left( \Omega, \rF_0; \dot{H}_{2,\sigma}^{1/2+\alpha}(\fR^3) \right)
\]
with $0 < \alpha < 1/2$.
We extend the well-posedness result for wider class of initial data 
\[
\bv_0,\bw_0 \in  L_5 \left( \Omega, \rF_0; \dot{H}^{-2/5}_p\right),
\]
but we admit that this paper does not cover the multiplicative noise case handled in \cite{Tan2016} with extra regularity condition on initial value.
The precise statements of our main results is the following two theorems.
The exact notations and definitions for function spaces are presented in the next section.

\begin{theorem}
\label{main thm 1} 
Let $\bv_0,\bw_0 \in  L_2 \left( \Omega, \rF_0; \dot{H}_{2,\sigma}^{1/2}(\fR^3) \right)$ and $G_1, G_2 \in  \dot {\bH}_{p,\sigma}^{1/2}(\infty, l_2)$.
\begin{enumerate}[(i)]
\item 
There exists a positive number $T_0$ such that the equation \eqref{pro eqn} with $T=T_0$ has a  solution 
\begin{align*}
\bv, \bw \in L_2 \left( \Omega,  \rF ; C\left([0,T_0]; \dot{H}_2^{1/2} \right)  \right)\cap L_2 \left(  \Omega \times (0,T_0), \bar \cP ; \dot{H}_2^{3/2}\right).
\end{align*}
\item
There exists a positive number $\varepsilon$ such that if
\[
\|(\bv_0,\bw_0)\|_{L_2 \left( \Omega, \rF_0; \dot{H}_{2}^{1/2} \right)} 
+\|(G_1,G_2)\|_{\dot {\bH}_{p}^{1/2}(\infty, l_2)} 
< \epsilon,
\]
then there exists a global in time solution $(\bv,\bw)$ to the equation \eqref{pro eqn} with 
\begin{align*}
\bv, \bw \in L_2 \left( \Omega,  \rF ; C\left([0,\infty); \dot{H}_2^{1/2} \right)  \right)\cap L_2 \left(  \Omega \times (0,\infty), \bar \cP ; \dot{H}_2^{3/2}\right).
\end{align*}
\end{enumerate}
\end{theorem}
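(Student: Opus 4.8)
The plan is to obtain the mild solution \eqref{E24} as a fixed point of a contraction map, carrying out the Fujita--Kato scheme at the scaling-critical regularity level in the stochastic setting. First I would fix the Banach space
\[
\bX_T:=L_2\!\left(\Omega,\rF;C([0,T];\dot H_2^{1/2})\right)\cap L_2\!\left(\Omega\times(0,T),\bar\cP;\dot H_2^{3/2}\right)
\]
with its natural norm, and record the parabolic interpolation embedding $\bX_T\hookrightarrow L_4\!\left((0,T);\dot H_2^{1}\right)$, which follows from $\norm{f(t)}_{\dot H_2^1}\le\norm{f(t)}_{\dot H_2^{1/2}}^{1/2}\norm{f(t)}_{\dot H_2^{3/2}}^{1/2}$ and H\"older in time. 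Writing $L_1(t)=S(t)\bv_0+\int_0^t S(t-s)G_1^k(s)\,dw_s^k$ and $L_2(t)=S(t)\bw_0+\int_0^t S(t-s)G_2^k(s)\,dw_s^k$ for the affine ``stochastic Stokes'' parts, one seeks a fixed point in $\bX_T\times\bX_T$ of
\[
\Phi(\bv,\bw):=\bigl(L_1-B(\bw,\bv),\ L_2-B(\bv,\bw)\bigr),
\]
which by construction is exactly a solution of \eqref{E24}.

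Next I would establish the estimates that make $\Phi$ contractive on a small ball. For the deterministic flow the spectral identity $\norm{S(\cdot)\bv_0}_{L_2((0,T);\dot H_2^{3/2})}^2=\tfrac12\int_{\fR^3}(1-e^{-2T|\xi|^2})\,|\xi|\,|\widehat{\bv_0}(\xi)|^2\,d\xi$, after taking expectations, gives $\norm{S(\cdot)\bv_0}_{\bX_T}\le C\norm{\bv_0}_{L_2(\Omega;\dot H_2^{1/2})}$ and, by dominated convergence, that for fixed data this tends to $0$ as $T\downarrow0$; this vanishing --- a feature of the critical exponent $1/2$ --- is what yields an arbitrary-data local result. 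For the stochastic convolution I would invoke the stochastic maximal $L_2$-regularity bound recorded earlier in the paper, which yields $\norm{\int_0^\cdot S(\cdot-s)G^k(s)\,dw_s^k}_{\bX_T}\le C\norm{G}_{\dot{\bH}_p^{1/2}(\infty,l_2)}$ together with the analogous decay as $T\downarrow0$, and whose proof rests on the Burkholder--Davis--Gundy inequality and an It\^o isometry computed in frequency. The genuinely nonlinear ingredient is the bilinear estimate
\[
\norm{B(\bu,\bv)}_{\bX_T}\le C\,\norm{\bu}_{\bX_T}\,\norm{\bv}_{\bX_T},
\]
which I would prove in three steps: (a) the three-dimensional product law $\norm{\bu\otimes\bv}_{\dot H_2^{1/2}}\lesssim\norm{\bu}_{\dot H_2^{1}}\norm{\bv}_{\dot H_2^{1}}$, via $\dot H_2^1\hookrightarrow L_6\cap\dot W_3^{1/2}$ and the fractional Leibniz rule, so that $\norm{\nabla\cdot(\bu\otimes\bv)}_{\dot H_2^{-1/2}}\lesssim\norm{\bu}_{\dot H_2^1}\norm{\bv}_{\dot H_2^1}$; (b) H\"older in time together with the embedding above, giving $\norm{\nabla\cdot(\bu\otimes\bv)}_{L_2((0,T);\dot H_2^{-1/2})}\le\norm{\bu}_{L_4((0,T);\dot H_2^1)}\norm{\bv}_{L_4((0,T);\dot H_2^1)}\lesssim\norm{\bu}_{\bX_T}\norm{\bv}_{\bX_T}$; (c) boundedness of $\bP$ on $\dot H_2^s$ and deterministic parabolic maximal regularity $L_2((0,T);\dot H_2^{-1/2})\to C([0,T];\dot H_2^{1/2})\cap L_2((0,T);\dot H_2^{3/2})$ applied to \eqref{E14}.

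With these in hand, part (ii) is immediate: if $\norm{(\bv_0,\bw_0)}_{L_2(\Omega;\dot H_2^{1/2})}+\norm{(G_1,G_2)}_{\dot{\bH}_p^{1/2}(\infty,l_2)}<\varepsilon$ with $\varepsilon$ fixed by the constants above, the affine part of $\Phi$ is small in $\bX_\infty$, $\Phi$ is a contraction on a small ball of $\bX_\infty\times\bX_\infty$, and its fixed point is the global solution. For part (i) one chooses $T_0>0$ small enough that the $\bX_{T_0}$-norm of the affine part is below the same threshold --- possible for arbitrary data thanks to the $T\downarrow0$ limits above --- and runs the same contraction on $[0,T_0]$.

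The hard part will be reconciling the quadratic map $B$ with a solution space built on $L_2(\Omega)$: the pathwise bound $\norm{B(\bw,\bv)(\cdot,\omega)}_{\bX_T}\le C\norm{\bw(\cdot,\omega)}_{\bX_T}\norm{\bv(\cdot,\omega)}_{\bX_T}$ only controls $B$ by $L_4(\Omega)$-norms, so a ball of $\bX_T\times\bX_T$ is not stable under $\Phi$ in a naive way. I would handle this by localisation --- either running the contraction for truncated initial data $\bv_0\mathbf{1}_{\{\norm{\bv_0}_{\dot H_2^{1/2}}\le N\}}$ (which belongs to every $L_q(\Omega)$, so the scheme closes in $L_q(\Omega;\bX)$ for large $q$) and then passing to the limit $N\to\infty$ with estimates uniform in $N$, or by stopping the process at the first time its $\bX$-norm reaches a fixed level and patching --- and I expect the delicate points to be exactly the uniformity of these estimates, the adaptedness and measurability of the iterates, and checking that the limit solves \eqref{E24} itself rather than a truncated equation.
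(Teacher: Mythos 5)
Your setup---the affine ``stochastic Stokes'' part, the Duhamel bilinear map, the interpolation $\norm{f}_{L_4((0,T);\dot H_2^1)}\lesssim\norm{f}_{C([0,T];\dot H_2^{1/2})}^{1/2}\norm{f}_{L_2((0,T);\dot H_2^{3/2})}^{1/2}$, the product-law/maximal-regularity proof of the pathwise bilinear estimate, and the $T\downarrow0$ smallness of the free evolution at critical regularity---all coincide with the paper's. The genuine gap is in your last paragraph. You correctly observe that the pathwise bilinear bound only controls $B$ by fourth $\Omega$-moments, so that $\Phi$ does not map a small ball of the $L_2(\Omega)$-based space into itself; this means your earlier claim that part (ii) is ``immediate'' is exactly what fails, and neither of the repairs you propose closes the argument. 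Truncating $\bv_0$ at level $N$ produces data in every $L_q(\Omega)$, but with norm of order $N^{1-2/q}\varepsilon^{2/q}$, not small uniformly in $N$; the contraction for the truncated problem then yields bounds that degenerate as $N\to\infty$, and the uniform-in-$N$ estimate needed to pass to the limit is precisely the $L_2(\Omega)$-closed bilinear estimate you do not have. Stopping at the first time the $\bX$-norm reaches a fixed level is also problematic because $B$ is a convolution in time, so the stopped process does not solve a stopped version of \eqref{E24} and there is no clean patching.

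The paper's resolution is a different, much lighter idea that your sketch is missing: decouple the space in which the Picard iteration is run from the space in which the solution is ultimately measured. One iterates in $\bX_{2,T}:=L_4\left(\Omega;L_4((0,T);\dot H_2^{1})\right)$, for which the bilinear estimate takes the asymmetric form $\norm{B(\bw,\bv)}_{\bX_{1,T}}\le C\norm{\bv}_{\bX_{2,T}}\norm{\bw}_{\bX_{2,T}}$ with $\bX_{1,T}$ the $L_2(\Omega)$-based energy space (Proposition \ref{bi est 1}); the Cauchy--Schwarz inequality on $\Omega$ closes this because both factors now carry fourth moments, and the pathwise interpolation \eqref{E31} sends the output back into $\bX_{2,T}$ so the iteration can be repeated and Lemma \ref{cont lem} applied there. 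Membership of the fixed point in the target class $\bX_{1,T}$ is then recovered a posteriori from the same bilinear estimate together with Corollary \ref{sto heat cor 1}. If you want to keep your scheme, replace the truncation/stopping plan by this change of ambient space; note that you must then check that the affine part itself lies in $\bX_{2,T}$, which is where the linear estimate for $\bE\sup_{t}\norm{\cdot}^2_{\dot H_2^{1/2}}$ and the interpolation \eqref{E31} enter.
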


\begin{theorem}
\label{main thm 3} 
Let $\bv_0,\bw_0 \in  L_5 \left( \Omega, \rF_0; \dot{H}^{-2/5}_{5,\sigma}\right)$ and $G_1, G_2 \in  \dot {\bH}_{5,\sigma}^{-1}(\infty, l_2)$.
\begin{enumerate}[(i)]
\item
There exists a positive number $T_0$ such that the equation  \eqref{pro eqn}  with $T=T_0$ has a solution $(\bv,\bw) \in \bL_5(T_0) \times \bL_5(T_0)$.
\item
There exists a positive number $\epsilon$ such that if
\[
\|(\bv_0,\bw_0)\|_{L_5 \left( \Omega, \rF_0; \dot{H}^{-2/5}_{5}\right)} 
+\|(G_1,G_2)\|_{\dot {\bH}_{5}^{-1}(\infty, l_2)} 
< \epsilon,
\]
then  there exists a global in time solution $(\bv,\bw)$ to the equation \eqref{pro eqn} with 
\[
\bv, \bw \in \bL_{5}( \infty).
\]
\end{enumerate}
\end{theorem}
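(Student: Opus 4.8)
\medskip

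\noindent\textbf{Proof strategy for Theorem \ref{main thm 3}.}
The plan is to recast the integral system \eqref{E24} as a fixed point problem and to solve it by the contraction mapping principle in the critical path space, following the Fujita--Kato scheme. Here $\dot H^{-2/5}_5(\fR^3)$ is the scaling--critical Sobolev space for the three dimensional equations and $\bL_5(T)$ is the associated critical path space, scale invariant under the parabolic scaling, for which the bilinear term $B$ behaves well. Concretely, set
\[
\Psi_i(t)=\int_0^t S(t-s)G^k_i(s)\,dw^k_s \qquad (i=1,2),
\]
and define, on $\bL_5(T)\times\bL_5(T)$, the map
\[
\mathcal{T}(\bv,\bw)=\Big(\,S(t)\bv_0 - B(\bw,\bv)+\Psi_1,\ \ S(t)\bw_0 - B(\bv,\bw)+\Psi_2\,\Big).
\]
A fixed point of $\mathcal{T}$ in a small closed ball of $\bL_5(T)\times\bL_5(T)$ is exactly a mild solution of \eqref{pro eqn} in the sense of \eqref{E24}; since the two components of $\mathcal{T}$ are symmetric in $(\bv,\bw)$, one ball of equal radius in each factor suffices.

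The argument rests on three estimates, which I would establish first (these are the analytic core and, I expect, occupy the preceding section): (a) a heat--semigroup smoothing estimate
\[
\big\|S(\cdot)\bv_0\big\|_{\bL_5(\infty)}\le C\,\|\bv_0\|_{L_5(\Omega,\rF_0;\dot H^{-2/5}_{5})},
\]
identifying the free evolution of $\dot H^{-2/5}_5$ data with the critical path space (via $L_q$--$L_r$ heat kernel bounds and an embedding into the corresponding Besov scale); (b) a stochastic maximal regularity estimate of Burkholder--Davis--Gundy type,
\[
\big\|\Psi_i\big\|_{\bL_5(\infty)}\le C\,\|G_i\|_{\dot{\bH}^{-1}_{5}(\infty,l_2)};
\]
and (c) the bilinear estimate
\[
\big\|B(\bu,\bv)\big\|_{\bL_5(T)}\le C_B\,\|\bu\|_{\bL_5(T)}\,\|\bv\|_{\bL_5(T)}
\]
with $C_B$ \emph{independent of} $T$. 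For (c) one recalls $B(\bu,\bv)(t)=\int_0^t S(t-s)\bP\nabla\cdot(\bu\otimes\bv)(s)\,ds$, uses that $S(t-s)\bP\nabla\cdot$ maps the space factor $L_{5/2}$ (where $\bu\otimes\bv$ lives, by H\"older) to $L_5$ with kernel decay $(t-s)^{-4/5}$, applies the Hardy--Littlewood--Sobolev inequality in time (the exponent $5$ being precisely the one that sends the time convolution back into $L_5$ with a scale--invariant constant), and closes with H\"older's inequality in $\Omega$; the difference estimate $\|B(\bu_1,\bv_1)-B(\bu_2,\bv_2)\|\le C_B(\|\bu_1-\bu_2\|\,\|\bv_1\|+\|\bu_2\|\,\|\bv_1-\bv_2\|)$ follows from bilinearity.

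With (a)--(c) in hand, put $N(T)=\|S(\cdot)\bv_0\|_{\bL_5(T)}+\|S(\cdot)\bw_0\|_{\bL_5(T)}+\|\Psi_1\|_{\bL_5(T)}+\|\Psi_2\|_{\bL_5(T)}$. For part (i), $N(T)$ is finite by (a)--(b) and $N(T)\to0$ as $T\to0^+$ by dominated convergence, since it is the $\bL_5$ norm over $(0,T)$ of functions already lying in $\bL_5(\infty)$; choosing $R$ with $2C_BR<1$ and then $T_0$ so small that $N(T_0)\le R/2$, the map $\mathcal{T}$ sends the closed ball of radius $R$ in $\bL_5(T_0)\times\bL_5(T_0)$ into itself and is a strict contraction there (Lipschitz constant $\le2C_BR$), so Banach's fixed point theorem produces a solution, and predictability is preserved along the Picard iterates and hence passes to the limit. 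For part (ii), (a)--(b) give $N(\infty)\le C_0\big(\|(\bv_0,\bw_0)\|_{L_5(\Omega,\rF_0;\dot H^{-2/5}_{5})}+\|(G_1,G_2)\|_{\dot{\bH}^{-1}_{5}(\infty,l_2)}\big)$, so if this quantity is $<\varepsilon$ with $\varepsilon$ small enough that $4C_BC_0\varepsilon<1$, the same contraction runs directly on $\bL_5(\infty)\times\bL_5(\infty)$ with radius $R\simeq2C_0\varepsilon$, yielding the global solution. I expect the main obstacle to be ingredient (b): proving the stochastic maximal regularity bound for the stochastic convolution in the critical $L_5$ scale and pinning down the right $\dot{\bH}^{-1}_{5}(\infty,l_2)$ norm on the noise, the deterministic bilinear estimate (c) and the heat characterization (a) being essentially classical.
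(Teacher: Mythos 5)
Your proposal is correct and follows essentially the same route as the paper: your ingredients (a)--(b) are exactly Corollary \ref{sto heat cor 2}, your ingredient (c) with its Oseen-kernel/Young/Hardy--Littlewood--Sobolev proof is exactly Proposition \ref{bi est 2}, and your contraction on a small ball (with smallness threshold $4C_BC_0\varepsilon<1$ and $N(T)\to0$ as $T\to0^+$ for the local statement) is the paper's Lemma \ref{cont lem} applied to the Picard iterates.
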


\begin{remark}
If $\bX = \bL_5(\infty)$ or
\[
\bX=
L_4 \left( \Omega, \rF ; L_4((0,\infty) ; \dot{H}_2^{1/2 }(\R)) \right),
\]
then there is a  positive constant $C$ such that 
\[
\| B(\bv, \bw)\|_{\bX \times \bX} \leq C \| \bv\|_{\bX} \|\bw\|_{\bX} \qquad \forall \bv, \bw \in \bX,
\]
which is proved in Proposition \ref{bi est 1} and Proposition \ref{bi est 2}.
We can obtain an upper bound $\varepsilon < \frac{1}{4C_i C}$ $(i=5, 6)$ in the second parts of main theorems, where constants $C_5$ and $C_6$ appear in Corollary \ref{sto heat cor 1} and Corollary \ref{sto heat cor 2}, respectively.
Moreover, in this case, by the uniqueness of the fixed point theorem, the solution is unique in the closed subspace $\{ (\bu,\bv) \in \bX \times \bX : \|(\bu,\bv)\|_{\bX \times \bX} \leq 2\varepsilon\}$
(see Lemma \ref{cont lem}).
\end{remark}

\begin{remark}
Although we took physical constants to be 1, it is possible to consider the general physical constants in  the MHD equations, that  is,
\[
\left\{
\begin{split}
&(\p_t - \nu_1\Delta) \bv =  - \bP\nabla\cdot(\bw\otimes\bv) + G^k_1 \frac{dw^k}{dt} \\
&(\p_t - \nu_2 \Delta) \bw = - \bP\nabla\cdot(\bv\otimes\bw) + G^k_2 \frac{dw^k}{dt} \\
&\nabla \cdot \bv = \nabla \cdot \bw = 0,
\end{split}
\right.
\]
where the kinematic viscosity $\nu_1$ and the magnetic resistivity $\nu_2$ are positive constants. 
In this paper, we focus on studying \eqref{pro eqn} for simplicity.
\end{remark}

The organization of  this paper is as follows.
In Section 2, we introduce notations and definitions used throughout this paper. 
In Section 3, we  survey  linear theories for stochastic partial differential equations.
In Section 4, we prove bilinear estimates which play a crucial role.
In Section 5, we complete the proofs of main theorems.

\section{Notations and definitions}

The purpose of this section is to introduce notations and definitions which will be used throughout this paper.

\begin{itemize}
\item 
Let $\bN$ and $\bZ$ denote the natural number system and the integer number system, respectively.
As usual $\fR^{d}$, $d \in \bN$, stands for the Euclidean space of points $x=(x^{1},...,x^{d})$.
\item 
The gradient of a function $f$ is denoted  by 
\[
\nabla f = (D_1f, D_2f, \cdots, D_df).
\]
where $D_{i}f = \frac{\partial f}{\partial x^{i}}$ for $i=1,...,d$ and the divergence of a vector field $\bv=(\bv^1,\ldots, \bv^d)$ is denoted by
$$
\nabla \cdot \bv :=  \sum_{i=1}^d D_i \bv^i .
$$
\item 
Let $C^\infty(\fR^d)$ denote the space of infinitely differentiable functions on $\fR^d$. 
Let $C_c^\infty(\fR^d)$ denote the subspace of $C^\infty(\fR^d)$ with the compact support.
Let $C_{c,\sigma}^\infty(\fR^d)$ denote the subspace of $C_c^\infty(\fR^d)$ with divergence free.
Let $\cS(\fR^d)$ be the Schwartz space consisting of infinitely differentiable and rapidly decreasing functions on $\fR^d$.
We simply write $C^\infty, C_c^\infty, C_{c,\sigma}^\infty, \cS$ by omitting $(\fR^d)$.
\item  
For $\cO\subset \fR^d$ and a normed space $F$, we denote by $C(\cO;F)$ the space of all $F$-valued continuous functions $u : \cO \to F$ with the norm 
\[
|u|_{C}:=\sup_{x\in O}|u(x)|_F<\infty.
\]
\item 
For $p \in [1,\infty)$, a normed space $F$, and a  measure space $(X,\mathcal{M},\mu)$, we denote by $L_{p}(X,\cM,\mu;F)$ the space of all $\mathcal{M}^{\mu}$-measurable functions $u : X \to F$ with the norm 
\[
\left\Vert u\right\Vert _{L_{p}(X,\cM,\mu;F)}:=\left(\int_{X}\left\Vert u(x)\right\Vert _{F}^{p}\mu(dx)\right)^{1/p}<\infty
\]
where $\mathcal{M}^{\mu}$ denotes the completion of $\cM$ with respect to the measure $\mu$. 
If there is no confusion for the given measure and $\sigma$-algebra, we usually omit them.
\item 
We denote by $|\cO|$ the Lebesgue measure of a measurable set $\cO \subset \fR^d$.
\item 
Let $(\Omega,\rF, P)$ be a probability space and let $u(\omega, t,x)$ and $v(\omega, t,x)$ be stochastic processes on $\Omega \times (0,\infty) \times \fR^d$.
We say that with probability one, for all $t \in (0,\infty)$
\[
u(\omega,t,x) = v(\omega,t,x) \quad (x \text{-} a.e.)
\]
if there exists $\Omega' \subset \Omega$ such that $P(\Omega')=1$ and for all $(\omega' , t) \in \Omega' \times (0,\infty)$,
\[
u(\omega',t,x)= v(\omega',t,x)
\]
holds for almost every $x \in \fR^d$. 
For the notational convenience, the random parameter $\omega$ will be usually omitted. 
\item 
We denote the $d$-dimensional Fourier transform of $f$ by 
\[
\cF[f](\xi) := \int_{\fR^{d}} e^{-2\pi i \xi \cdot x} f(x) dx
\]
and the $d$-dimensional inverse Fourier transform of $f$ by 
\[
\cF^{-1}[f](x) := \int_{\fR^{d}} e^{ 2\pi ix \cdot \xi} f(\xi) d\xi.
\]
\item 
If we write $C=C(a,b,\cdots)$, this means that the constant $C$ depends only on $a,b,\cdots$. 
\item 
We shall write $A \lesssim B$ if there is a positive generic constant $C$ such that $|A| \le C|B|$.
\end{itemize}

Let $(\Omega,\rF,P)$ be a complete probability space, and $\{\rF_{t},t\geq0\}$ be a filtration satisfying the usual condition, i.e. $\{\rF_t\}$ is increasing , right continuous, and each  $\rF_t$ contains all
$(\rF,P)$-null sets.
In other words, $\rF_{t_1} \subset \rF_{t_2}$ if $t_1 \leq t_2$, $ \bigcap_{t<s}\rF_s = \rF_t$, and $A \subset \rF_t$ for all $t \geq 0$ if there exists a $B \in \rF$ such that $A \subset B$ and $P(B)=0$.  
We denote by $\cP$ the predictable $\sigma$-field generated by $\{\rF_{t},t\geq0\}$.
We assume that $w^k_{t}$ are independent one-dimensional Brownian motions (Wiener
processes) on $(\Omega,\rF,P)$ for $(k=1,2,\ldots)$ and they are relative to $\{\rF_{t},t\geq0\}$.

We end this section by introducing inhomogeneous and homogeneous Sobolev spaces and related function spaces used in this article.

\begin{definition}[Inhomogeneous Sobolev spaces]
\begin{enumerate}
\item
For $n \in \fR$ and $p \in (1,\infty)$, define the space 
$H^{n}_p(\fR^3)=(1-\Delta)^{-n/2}L_p(\fR^3)$ (called
the space of Bessel potentials or the Sobolev space with fractional
derivatives) as the set of all tempered distributions $u$ such that
\[
(1-\Delta)^{n/2}u := \cF^{-1}\left[\left(1+|2\pi \xi|^2\right)^{n/2}\cF(u)(\xi) \right] \in L_p(\fR^3)
\]
with the norm
\[
\|u\|_{H^{n}_p(\fR^3)} :=\|(1-\Delta)^{n/2}u\|_{L_p(\fR^3)} < \infty.
\]
\item
The set of all $u =(u^1, u^2, u^3 )$ such that $u^1,u^2,u^3 \in H_p^n( \fR^3)$ is denoted by
$H^{n}_{p}(\fR^3; \fR^3 )$
and the norm is given by
\[
\|u\|_{H^{n}_{p}(\fR^3; \fR^3 )}
:= \sum_{i=1}^3 \left\| ( 1-\Delta)^{n/2}u^i \right\|_{L_p(\fR^3)} < \infty.
\]
$H^{n}_{p,\sigma}(\fR^3; \fR^3 )$ denotes the closure of $C_{c,\sigma}^\infty(\fR^3; \fR^3 )$ in $H^{n}_{p}(\fR^3; \fR^3 )$.
\end{enumerate}
\end{definition}

For a sequence
$$
a=(a^1,a^2,\ldots)=(a^k)_{k \in \fN},
$$
we define $\|a\|^2_{l_2} :=  \sum_{k=1}^\infty |a^k|^2$ and denote by $l_2$ the space of all sequences $a$ so that $\|a\|_{l_2} < \infty$.
For $u=(u^k)_{k \in \fN}$ $(u^k \in H_p^n(\fR^3))$ , we define the space $H^n_p(\fR^3;l_2)$ as the set of all $l_2$-valued tempered distributions such that 
$$
\|u\|_{H^n_p(\fR^3;l_2)}
:=\left\| |(1-\Delta)^{n/2}u|_{l_2} \right\|_{L_p(\fR^3)} 
< \infty.
$$
The set of all $u =(u^1, u^2, u^3 )$ such that $u^1,u^2,u^3 \in H_p^n( \fR^3;l_2)$ is denoted by
$H^{n}_{p}(\fR^3; \fR^3 \times l_2)$
with the norm 
$$
\|u\|_{H^{n}_{p}(\fR^3; \fR^3  \times l_2)}
:= \sum_{i=1}^3 \left\|  u^i  \right\|_{H_p^n(\fR^3; l_2)}
$$
$H^{n}_{p,\sigma}(\fR^3; \fR^3  \times l_2)$ denotes the subspace of $H^{n}_{p}(\fR^3; \fR^3 \times l_2)$ in which every component  is divergence-free, i.e.
\begin{align*}
u=\left(u^{ik} ~(i =1,2,3, ~k \in \bN) \right)~\in H^{n}_{p,\sigma}(\fR^3; \fR^3\times l_2 ) \quad 
\end{align*}
if and only if $u^{ik} \in H^{n}_{p,\sigma}(\fR^3; \fR^3 )$ for all $i=1,2,3$ and $k \in \bN$.
In particular, we put $L_{p,\sigma}:=H^{0}_{p,\sigma}$.
For the notational convenience, we set
\begin{align*}
H_p^n &= H^{n}_{p}(\fR^3; \fR^3 ), \\
H^{n}_{p,\sigma} &= H^{n}_{p,\sigma}(\fR^3; \fR^3), \\
H_p^n(l_2) &= H^{n}_{p}(\fR^3; \fR^3 \times l_2), \\
H^{n}_{p,\sigma}(l_2) &= H^{n}_{p,\sigma}(\fR^3; \fR^3 \times l_2).
\end{align*}
We write $u\in \bH^{n}_p(T)$ if $u$ is an $H^{n}_p$-valued $\bar{\cP}$-measurable process satisfying
\[
\|u\|_{\bH^{n}_p(T)}:= \left( \bE \left[ \int^T_0
\,\|u\|^p_{H^{n}_p}\,dt \right] \right)^{1/p}<\infty.
\]

For the notational convenience, we set
\begin{align*}
\bH^{n}_{p}(T) &:= L_p(\Omega\times (0,T),\bar \cP;H^{n}_{p}), \\
\bH^{n}_{p,\sigma}(T) &:= L_p(\Omega\times (0,T),\bar \cP;H^{n}_{p,\sigma}) \\
\bH^{n}_{p}(T,l_2) &:= L_p(\Omega\times (0,T),\bar \cP;H^{n}_{p}(l_2)), \\ 
\bH^{n}_{p,\sigma}(T,l_2) &:= L_p(\Omega\times (0,T),\bar \cP;H^{n}_{p,\sigma}(l_2)).
\end{align*}
Similarly, we write 
\begin{align*}
U^{n}_{p} &:= L_p(\Omega, \rF_0;H^{n}_{p}), \\ 
U^{n}_{p,\sigma} &:= L_p(\Omega, \rF_0;H^{n}_{p,\sigma}),
\end{align*}
and simply
\begin{align*}
\bL_{p}(T) &:=\bH^{0}_{p}(T), \\
\bL_{p,\sigma}(T) &:=\bH^{0}_{p,\sigma}(T), \\
\bL_{p}(T,l_2) &:=\bH^{0}_{p}(T,l_2), \\
\bL_{p,\sigma}(T,l_2) &:=\bH^{0}_{p,\sigma}(T,l_2).
\end{align*}

\begin{definition}[Homogeneous Sobolev spaces]
\begin{enumerate}
\item
We denote by $\dot{H}_p^{n}(\fR^3; \fR^3)$ the space of all $\fR^3$-valued tempered distribution $u=(u^1,u^2,u^3)$ modulo by polynomials such that 
\[
(-\Delta)^{n/2} u^i := \cF^{-1} \left[ |2\pi \xi|^{n}  \cF(u^i)(\xi)  \right] \in L_p(\fR^3)
\]
with the norm
\[
\|u\|_{\dot {H_p^{n} } (\fR^3;\fR^3)}
:= \sum_{i=1}^3 \left\| (-\Delta)^{n/2} u^i   \right\|_{L_p(\fR^3)} <\infty.
\]
We denote by $\dot{H}_p^{n}(\fR^3; \fR^3 \times l_2)$ the space of all sequence $u=(u^{ik})_{i=1,2,3, ~k \in \fN}$ such that $u^k \in \dot{H_p^n}(\fR^3; \fR^3)$ for all $k \in \bN$ and
$$
\|u\|_{\dot{H_p^n}(\fR^3; \fR^3 \times l_2)}:=  \sum_{i=1}^3 \left\|   \left(\sum_{k=1}^\infty\left|(-\Delta)^{n/2} u^{ik} \right|^2  \right)^{1/2} \right\|_{L_p(\fR^3)}  < \infty.
$$
$\dot{H}_p^{n, \sigma}(\fR^3; \fR^3)$ and $\dot{H}_p^{n, \sigma}(\fR^3; \fR^3 \times l_2)$ denote the subspace of
$\dot{H}_p^{n}(\fR^3; \fR^3)$ and $\dot{H}_p^{n}(\fR^3; \fR^3 \times l_2)$ whose elements are divergence free, respectively, i.e.
$\dot{H}_p^{n, \sigma}(\fR^3; \fR^3)$ and $\dot{H}_p^{n, \sigma}(\fR^3; \fR^3 \times l_2)$ are closures of $C_c^\infty$ with respect the norms 
in $\dot{H}_p^{n}(\fR^3; \fR^3)$ and $\dot{H}_p^{n}(\fR^3; \fR^3 \times l_2)$, respectively.

\item
Similar to inhomogeneous function spaces, we set deterministic spaces
\begin{align*}
\dot H_p^n &= H^{n}_{p}(\fR^3; \fR^3) \\ 
\dot H^{n}_{p,\sigma} &= \dot H^{n}_{p,\sigma}(\fR^3; \fR^3) \\
\dot H_p^n(l_2) &= H^{n}_{p}(\fR^3; \fR^3 \times l_2) \\
\dot H^{n}_{p,\sigma}(l_2) &= \dot H^{n}_{p,\sigma}(\fR^3; \fR^3 \times l_2),
\end{align*}
and  stochastic spaces 
\begin{align*}
\dot \bH^{n}_{p}(T) &:= L_p(\Omega\times (0,T),\bar \cP; \dot  H^{n}_{p}) \\
\dot \bH^{n}_{p,\sigma}(T) &:= L_p(\Omega\times (0,T),\bar \cP; \dot  H^{n}_{p,\sigma}) \\
\dot \bH^{n}_{p}(T,l_2) &:= L_p(\Omega\times (0,T),\bar \cP; \dot H^{n}_{p}(l_2)) \\
\dot \bH^{n}_{p,\sigma}(T,l_2) &:= L_p(\Omega\times (0,T),\bar \cP; \dot H^{n}_{p,\sigma}(l_2)),
\end{align*}
and
\begin{align*}
\dot  U^{n}_{p} &:= L_p(\Omega, \rF_0; \dot H^{n}_{p}) \\
\dot  U^{n}_{p} &:=L_p(\Omega, \rF_0; \dot H^{n}_{p,\sigma}).
\end{align*}
\end{enumerate}
\end{definition}

\begin{remark}
It is well-known that two norms $\|\cdot \|_{H_p^n}$ and $\|\cdot \|_{\dot H_p^n}+\|\cdot\|_{L_p}$ are equivalent if $1 < p < \infty$ and $n>0$ (cf. \cite[Theorem 6.3.2]{JLinter1976}).
Thus, $H_p^n=\dot H_p^n   \cap L_p$ and
$\|\cdot \|_{ \dot H_p^n} \lesssim   \|\cdot \|_{ H_p^n}$ if $1 < p <\infty$ and $n >0$.
\end{remark}

\section{Linear theories for stochastic heat equations}

In this section, we introduce linear theories for stochastic PDEs.
Recently, analytic regularity theories for stochastic PDEs have been well developed.
We refer the reader to \cite{Krylov1999}, which is considered as one of bibles in this area. 
However, most of the estimates and theories are handled in inhomogeneous Sobolev space setting.
To prove our main theorems, we need to obtain homogeneous type estimates for linear stochastic PDEs.
Since we could not find an appropriate reference to show homogeneous type estimates for stochastic PDEs, we  give detailed proofs.
In addition, we note that all functions in the following theorem are $\fR^3$-valued, but the results in \cite{Krylov1999} are for scalar-valued functions.
Since our leading operator is Laplacian, those are easily extended to $\fR^3$-valued functions without any difficulty. 

\begin{proposition}
				\label{linear thm}
Let $n \in \fR$ and $T \in (0,\infty)$.
If $u_0 \in U_p^{n+2-2/p}$, $f \in \bH_p^{n}(T)$, and $g \in \bH_p^{n+1}(T,l_2)$, then there exists a unique solution $u \in \bH_p^{n+2}(T)$ to 
\begin{equation}
\label{heat eqn}
\begin{split}
u_t(t,x) &= \Delta u(t,x) + f(t,x) + g^k(t,x) \frac{dw^k}{dt} \\
u(0,x) &= u_0(x)
\end{split}
\end{equation}
in $(0,T) \times \fR^3$ in the sense that for any $\phi \in C_c^\infty$, with probability one, for all $t \in (0,T)$,
\begin{align*}
\left(u(t,\cdot) , \phi \right) = \left( u_0, \phi \right) + \int_0^t \left[\left(  u(s,\cdot), \Delta \phi \right) + \left(f(s,\cdot) , \phi\right) \right] ds + \int_0^t \left( g^k(s,\cdot), \phi \right) dw_s^k.
\end{align*}
Moreover, there exist positive constants $C_1(T,p)$, $C_2(p)$, $C_3(T,p)$, and $C_4(p)$ such that
\begin{align}
						\label{heat est}
\|u\|_{\bH_p^{n+2}(T)} &\leq C_1 \left( \|u_0\|_{U_p^{n+2-2/p}} + \|f\|_{\bH_p^{n}(T)} + \|g\|_{\bH_p^{n+1}(T,l_2)}\right) \\
						\label{heat est 2}
\|  u\|_{\dot \bH_p^{n+2}(T)} &\leq C_2 \left( \|  u_0\|_{\dot U_p^{n+2-2/p}} + \|  f\|_{\dot \bH_p^{n}(T)} + \|g\|_{ \dot \bH_p^{n+1}(T,l_2)}\right)
\end{align}
and 
\begin{align}
\bE \sup_{0 \leq t \leq T}\|  u(t,\cdot)\|^p_{ H_p^{n+2 -2/p}}   
						\label{heat est 3}
&\leq C_3 \left( \|  u_0\|^p_{\dot U_p^{n+2-2/p}} + \|  f\|^p_{\bH_p^{n+1-2/p}(T)} + \|g\|^p_{ \bH_p^{n+2-2/p}(T,l_2)}\right) \\
\bE \sup_{0 \leq t \leq T}\|  u(t,\cdot)\|^p_{\dot H_p^{n+2 -2/p}}    
						\label{heat est 4}
&\leq C_4 \left( \|  u_0\|^p_{\dot U_p^{n+2-2/p}} + \|  f\|^p_{\dot \bH_p^{n+1-2/p}(T)} + \|g\|^p_{ \dot \bH_p^{n+2-2/p}(T,l_2)}\right).
\end{align}
\end{proposition}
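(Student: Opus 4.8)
The plan is to deduce everything from the scalar $L_p$-theory for stochastic parabolic equations in \cite{Krylov1999}. The existence and uniqueness of $u \in \bH_p^{n+2}(T)$ and the inhomogeneous bounds \eqref{heat est} and \eqref{heat est 3} follow by applying that theory to each scalar component $u^i$ of $u=(u^1,u^2,u^3)$ separately: the leading operator $\Delta$ acts diagonally, so no coupling arises and the $\fR^3$-valued statements reduce immediately to the scalar ones. Thus the genuinely new content is the two homogeneous estimates \eqref{heat est 2} and \eqref{heat est 4}, on which I would concentrate the work.

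For the homogeneous estimates I would first reduce to the case $n=0$: since $(-\Delta)^{n/2}$ commutes with $\p_t$ and $\Delta$ and is, by the very definition of the homogeneous norms, an isometry $\dot H_p^s\to\dot H_p^{s-n}$, the function $v:=(-\Delta)^{n/2}u$ solves \eqref{heat eqn} with data $\bigl((-\Delta)^{n/2}u_0,(-\Delta)^{n/2}f,(-\Delta)^{n/2}g\bigr)$, and one checks directly that the $n=0$ inequality applied to $v$ is exactly the general-$n$ inequality for $u$. For $n=0$ I would use the mild representation
\[
u(t)=S(t)u_0+\int_0^t S(t-s)f(s)\,ds+\int_0^t S(t-s)g^k(s)\,dw_s^k=:u^{(1)}(t)+u^{(2)}(t)+u^{(3)}(t),
\]
whose equivalence with the distributional formulation in \eqref{heat eqn} is part of Krylov's theory, and estimate the three pieces separately in $\dot\bH_p^2(T)$ and in $\bE\sup_{t\le T}\|\cdot\|_{\dot H_p^{2-2/p}}^p$. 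The term $u^{(1)}$ is handled pathwise in $\omega$ (then integrate over $\Omega$) by the classical deterministic heat-semigroup bounds $\int_0^T\|\Delta S(t)u_0\|_{L_p}^p\,dt\lesssim\|u_0\|_{\dot H_p^{2-2/p}}^p$ and $\sup_{t}\|S(t)u_0\|_{\dot H_p^{2-2/p}}\lesssim\|u_0\|_{\dot H_p^{2-2/p}}$, which come from the heat-kernel representation and the characterization of $\dot H_p^{2-2/p}$. The term $u^{(2)}$ is handled, again pathwise, by homogeneous deterministic parabolic $L_p$-maximal regularity on $(0,\infty)\times\fR^3$, namely $\|\Delta u^{(2)}\|_{L_p((0,T)\times\fR^3)}\lesssim\|f\|_{L_p((0,T)\times\fR^3)}$, together with the accompanying trace bound in $\dot H_p^{2-2/p}$.

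The stochastic convolution $u^{(3)}$ is the heart of the matter. My plan is to re-run Krylov's proof of the $\bH_p^{n+2}(T)$-bound for the stochastic convolution, and of the associated Kolmogorov-type time-continuity bound, \emph{verbatim} but with the Bessel potential $(1-\Delta)$ replaced by the Riesz potential $(-\Delta)$ throughout: that argument rests only on stochastic Fubini, the Burkholder--Davis--Gundy inequality, and Calder\'on--Zygmund / Littlewood--Paley estimates for kernels built from the heat semigroup, and on the whole space these are insensitive to the replacement of $1-\Delta$ by $-\Delta$. This yields $\|u^{(3)}\|_{\dot\bH_p^2(T)}\lesssim\|g\|_{\dot\bH_p^1(T,l_2)}$ and $\bE\sup_{t\le T}\|u^{(3)}(t)\|_{\dot H_p^{2-2/p}}^p\lesssim\|g\|_{\dot\bH_p^{2-2/p}(T,l_2)}^p$. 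Summing the three contributions gives \eqref{heat est 2} and \eqref{heat est 4}; \eqref{heat est} and \eqref{heat est 3} are then recovered by combining these with the cited inhomogeneous results and the relation between homogeneous and inhomogeneous norms recalled in the previous section.

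The main obstacle I anticipate is precisely the homogeneous, pathwise time-continuous control of $u^{(3)}$ required for \eqref{heat est 4}: one must verify that the low-frequency part of $\|u^{(3)}(t,\cdot)\|_{\dot H_p^{2-2/p}}$ is finite and continuous in $t$ while working with a single fixed representative, homogeneous spaces being defined only modulo polynomials, which forces one to use that the regularity gained by the stochastic convolution together with the integrability exponent $p$ is exactly what is needed to close the factorization argument. A secondary, more routine difficulty is checking that Krylov's kernel estimates, originally stated in the $(1-\Delta)$-scale, transfer to the homogeneous scale with no loss near zero frequency.
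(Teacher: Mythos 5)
Your overall architecture (quote Krylov for existence, uniqueness, \eqref{heat est} and \eqref{heat est 3}; treat the two homogeneous bounds as the real work; reduce the order $n$ by commuting $(-\Delta)^{n/2}$ through the equation; split $u$ into the three pieces coming from $u_0$, $f$, $g$) matches the paper's skeleton, but the way you close the homogeneous estimates is genuinely different and substantially heavier than what the paper does. The paper never re-proves any harmonic-analysis estimate on the homogeneous scale: for each of the three pieces it applies $(-\Delta)^{s}$ with $s$ chosen so that the \emph{right-hand side} of the already-known inhomogeneous bound \eqref{heat est} lands at regularity exactly $0$ (where $H_p^0=L_p=\dot H_p^0$, so the inhomogeneous norm \emph{is} the homogeneous one) while the left-hand side lands at positive regularity (where $\|\cdot\|_{\dot H_p^m}\lesssim\|\cdot\|_{H_p^m}$); concretely it uses \eqref{heat est} at levels $n=-2+2/p$, $n=0$ and $n=-1$ for the initial-value, drift and noise parts respectively, and then superposes. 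In particular your ``heart of the matter'' --- re-running Krylov's Burkholder--Davis--Gundy/Calder\'on--Zygmund argument verbatim with $1-\Delta$ replaced by $-\Delta$ --- is exactly the step the paper avoids: the homogeneous bound $\|u^{(3)}\|_{\dot\bH_p^{n+2}(T)}\lesssim\|g\|_{\dot\bH_p^{n+1}(T,l_2)}$ already follows from the inhomogeneous statement by the index shift above, so you would be redoing work that is not needed (the paper's Remark~\ref{main rem}(v) notes the stochastic maximal $L_p$-regularity of van Neerven--Veraar--Weis as an alternative justification, which is essentially the result you propose to re-derive by hand). Two further points where your write-up is thinner than the paper's argument: (a) the constants $C_2,C_4$ must be independent of $T$ even though $C_1,C_3$ are not, and the paper secures this by a parabolic rescaling $t\mapsto Tt$, $x\mapsto\sqrt{T}x$, $w_t\mapsto T^{-1/2}w_{Tt}$ reducing everything to $T=1$; your route would deliver $T$-uniformity only if you carry out the maximal-regularity estimates on $(0,\infty)$, which you mention for $u^{(2)}$ but not for $u^{(3)}$; (b) your worry about low frequencies and the ``modulo polynomials'' ambiguity in \eqref{heat est 4} is legitimate but dissolves under the paper's scheme, since \eqref{heat est 4} is obtained from \eqref{heat est 3} applied to $(-\Delta)^{s}u$, which is an honest $H_p$-valued process. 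So: a correct plan in outline, but it front-loads the hardest analysis where a purely formal reduction to the inhomogeneous theory suffices.
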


\begin{proof}
This proposition with the estimates \eqref{heat est} and \eqref{heat est 3} was proved in \cite[Lemma 4.1 and Theorem 4.10]{Krylov1999}.
We only prove \eqref{heat est 2} since the proof of \eqref{heat est 4} is similar.
We may prove it with assuming $T=1$ due to scaling.
Indeed, Brownian motions have the self-similarity, that is, 
\[
\tilde w_t:=\frac{1}{\sqrt T} w_{Tt}.
\]
is also a Brownian motion.
If the proposition is proved for $T=1$, then we define 
\begin{align*}
\tilde u(t,x) &= u(Tt,\sqrt T x) \\
\tilde u_0 (x) &= u_0(\sqrt T x) \\
\tilde f(t,x) &= T f(Tt, \sqrt T x) \\
\tilde g (t,x) &=  \sqrt T g(Tt, \sqrt x),
\end{align*}
where $u,u_0,f,g$ satisfy \eqref{heat eqn} so that $\tilde u$ is a solution to 
\begin{align*}
&\tilde u_t(t,x)= \Delta \tilde u(t,x) + \tilde f(t,x) + \tilde g^k(t,x) \frac{d \tilde w^k}{dt} \\
&\tilde u(0,x)= \tilde u_0(x)
\end{align*}
in $(0,1) \times \fR^3$.
Due to the estimate for $\tilde u$, we have
\begin{align*}
\|  u\|_{\dot \bH_p^{n+2}(T)} 
&= T^{1/p} T^{-(n+2)/2} T^{3/(2p)}\|  \tilde u \|_{\dot \bH_p^{n+2}(1)}  \\
&\leq T^{1/p} T^{-(n+2)/2} T^{3/(2p)} C_2 \left( \|  \tilde u_0\|_{\dot U_p^{n+2-2/p}} + \|  \tilde f\|_{\dot \bH_p^{n}(1)} + \|\tilde g\|_{ \dot \bH_p^{n+1}(1,l_2)}\right) \\
&= C_2 \left( \|  u_0\|_{\dot U_p^{n+2-2/p}} + \|  f\|_{\dot \bH_p^{n}(T)} + \|g\|_{ \dot \bH_p^{n+1}(T,l_2)}\right).
\end{align*}

Now, we assume $T=1$ and split the proof into four cases.
\begin{itemize}
\item {\bf Case 1}. ($f=0$ and $g=0$). 
Then taking $(-\Delta)^{(n+2)/2-1/p}$ to both sides of \eqref{heat eqn} and applying  \eqref{heat est} with $n=-2+2/p$, we have
\begin{equation}
\label{0224 1}
\begin{split}
\|u\|_{\dot \bH_p^{n+2}} 
\lesssim \|\Delta^{(n+2)/2-1/p}u\|_{ \bH_p^{2/p}} 
\lesssim \|\Delta^{(n+2)/2-1/p} u_0\|_{ U_p^{0}} 
= \|u_0\|_{ \dot U_p^{n+1-2/p}}.
\end{split}
\end{equation}
\item {\bf Case 2}. ($u_0=0$ and $g=0$). 
Then taking $(-\Delta)^{n/2}$ to both sides of \eqref{heat eqn} and applying \eqref{heat est} with $n=0$, we have
\begin{equation}
\label{0224 2}
\|u\|_{\dot \bH_p^{n+2}} 
\lesssim \|\Delta^{n/2}u\|_{ \bH_p^{2}} 
\lesssim  \|\Delta^{n/2}f\|_{ \bL_p  }
= \| f\|_{ \dot \bH_p^n }.
\end{equation}
\item {\bf Case 3}.  ($u_0=0$ and $f=0$). 
Then taking $(-\Delta)^{(n+1)/2}$ to both sides of \eqref{heat eqn} and applying \eqref{heat est} with $n=-1$, we have
\begin{equation}
\label{0224 3}
\begin{split}
\|  u\|_{\dot \bH_p^{n+2}} 
\lesssim \|  \Delta^{(n+1)/2}u\|_{ \bH_p^{1}} 
\lesssim  \|   \Delta^{(n+1)/2} g\|_{ \bL_p( T,l_2)} 
= \| g\|_{ \dot \bH_p^{n+1} (T,l_2)}.
\end{split}
\end{equation}

\item {\bf Case 4}.(General Case).  
Suppose 
$u^1$ is a unique solution to 
\begin{align*}
&u^1_t(t,x)= \Delta u^1(t,x) \\
&u^1(0,x)= u_0(x),
\end{align*}
$u^2$ is a unique solution to 
\begin{align*}
&u^2_t(t,x)= \Delta u^2(t,x) + f(t,x) \\
&u^2(0,x)= 0,
\end{align*}
and $u^3$ is a unique solution to 
\begin{align*}
&u^3_t(t,x)= \Delta u^3(t,x)  + g^k(t,x) \frac{dw^k}{dt} \\
&u^3(0,x)= 0.
\end{align*}
Then we get the solution $u=u^1+u^2+u^3$ for the general case.
Combining the estimates \eqref{0224 1}, \eqref{0224 2}, and \eqref{0224 3}, we obtain that \eqref{heat est 2} with $T=1$.
\end{itemize}
\end{proof}

\begin{remark}
The constants $C_1$ and $C_3$ depend on $T$. However, one can take $C_1$ and $C_3$ uniformly for all $T \in [0,\bar{T}]$, i.e. for all $T \in [0,\bar T]$,
\begin{align*}
\|u\|_{\bH_p^{n+2}(T)} \leq C_1 \left( \|u_0\|_{U_p^{n+2-2/p}} + \|f\|_{\bH_p^{n}(T)} + \|g\|_{\bH_p^{n+1}(T,l_2)}\right),
\end{align*}
and
\begin{align*}
\bE \sup_{0 \leq t \leq T}\|  u(t,\cdot)\|^p_{ H_p^{n+2 -2/p}} 
& \leq C_3 \left( \|  u_0\|^p_{ U_p^{n+2-2/p}} + \|  f\|^p_{ \bH_p^{n+1-2/p}(T)} + \|g\|^p_{  \bH_p^{n+2-2/p}(T,l_2)}\right),
\end{align*}
where constants $C_1$ and $C_3$ depend only on $\bar T$ and $p$.
\end{remark}

We denote by $\bH_c^\infty(T, l_2)$ the space of stochastic processes $g=(g^1,g^2, \ldots)$ such that $g^k=0$ for all large $k$ and each $g^k$ is  of the type
$$
g^k(t,x)= \sum_{i=1}^{j(k)}1_{(\tau_{i-1},\tau_i]}(t) g^{ik}(x),
$$
where  $j(k) \in \bN$, $g^{ik} \in C_c^\infty$, and $\tau_i$ are stopping times with $\tau_i \leq T$.
Similarly, we denote by $\bH_c^\infty(T)$ the space of the processes $g$ such that 
$$
g(t,x)= \sum_{i=1}^{j}1_{(\tau_{i-1},\tau_i]}(t) g^{i}(x),
$$
where $j \in \bN$, $g^{i} \in C_c^\infty$, and $\tau_i$ are stopping times with $\tau_i \leq T$.
Lastly, we denote by $\bH_c^\infty(\fR^3)$ the space of the random variables $g_0$ of the type
$$
g_0(\omega,x)= 1_A(\omega)g(x)
$$
where  $g \in C_c^\infty$, and $ A \in \rF_0$.

\begin{remark}
				\label{main rem}
\begin{enumerate}[(i)]
\item It is known that $\bH^\infty_c(T,l_2)$ is dense in $\bH^n_p(T,l_2)$ for all $p \in (1,\infty)$ and $n \in \fR$ (for instance, see \cite[Theorem 3.10]{Krylov1999}).
In particular, $\bH_c^\infty(T)$ is dense in $\bH^n_p(T)$ for all $p \in (1,\infty)$ and $n \in \fR$.
Following the idea of \cite[Theorem 3.10]{Krylov1999}, one can also easily check that $\bH_c^\infty(\fR^3)$ is dense in $U_p^n$ for all $p \in (1,\infty)$ and $n \in \fR$.

\item If $g \in \bH_p^{n+1}(T,l_2)$ , then the stochastic integral 
\[
\int_0^t \left( g^k(s,\cdot), \phi \right) dw_s^k
\]
is well-defined in the classical It\^o-sense (cf. \cite[Remark 3.2]{Krylov1999}).
Moreover, the $H_p^{n+1}(l_2)$-valued stochastic integral
\[
\int_0^t g(s,x) dw_s
\]
is defined by recently developed UMD-space valued stochastic integral theory (cf. \cite{van2007stochastic}).
\item The dual space of $H_p^n$ is $H_q^{-n}$, where $q= \frac{p}{p-1}$, and one can find a countable subset of $C_c^\infty$ which is dense in $H_q^{-n}$.
Thus, \eqref{heat est} can be interpreted in strong sense. i.e. \eqref{heat eqn} holds if and only if with probability one, for all $t \in (0,T)$,
\begin{align*}
u(t,x)  =  u_0 + \int_0^t \left(\Delta u(s,x) + f(s,x) \right) ds + \int_0^t g^k(s,x) dw_s^k
\end{align*}
where the equality holds as an element of $H_p^n$,
$\int_0^t \left(\Delta u(s,x) + f(s,x) \right) ds$
is $H_p^n$-valued classical Bochner's integral, and 
$\int_0^t g(s,x) dw_s$ is $H_p^n(l_2)$-valued stochastic integral.

\item
If $u_0 \in \bH_c^\infty(\fR^3)$, $f \in \bH_c^\infty(T)$, and $g \in \bH_c^\infty(T,l_2)$, then the solution $u$ is given by (cf. \cite[proof of Theorem 4.2]{Krylov1999})
\begin{align}
						\label{repre eq}
u(t,x) = S(t)u_0(x) + \int_0^t S(t-s)f(s,\cdot)(x) ds + \int_0^t S(t-s)g(s,\cdot)dw^k_s,
\end{align}
where 
\begin{align*}
p(t,x) &=  (4\pi t)^{-3/2} \exp \left( - |x|^2/(4t)  \right) \\
S(t)u_0(x) &=  \int_{\fR^3}p(t,x-y) u_0(y)dy.
\end{align*}
Due to \eqref{heat est}, the standard approximation, and UMD space-valued stochastic integration theories, 
\eqref{repre eq} holds even for general 
$u_0 \in \bH_p^{n+2}(\fR^3)$, $f \in \bH_p^n(T)$, and $g \in \bH_p^n(T,l_2)$.

\item 
In Proposition \ref{linear thm}, we assumed that  $u_0 \in U_p^{n+2-2/p}$, $f \in \bH_p^{n}(T)$, $g \in \bH_p^{n+1}(T,l_2)$.
However by using approximations in $u^N_0 \in \dot U_p^{n+2-2/p} \cap U_p^{n+2-2/p}$, $f^N \in \dot \bH_p^{n}(T) \cap \bH_p^{n}(T)$, $g^N \in  \dot \bH_p^{n+1}(T,l_2) \cap \bH_p^{n+1}(T,l_2)$ $(N=1,2,\ldots)$
or applying a UMD-space valued stochastic maximal $L_p$-inequality (\cite[Theorem 1.1]{van2012stochastic}), one can easily check that 
\eqref{heat est 2} and \eqref{heat est 4} hold for all $u_0 \in \dot{U}_p^{n+2-2/p}$, $f \in \dot{\bH}_p^{n}(T)$, $g \in \dot{\bH}_p^{n+1}(T,l_2)$, and $u$ defined as in \eqref{repre eq}.
\end{enumerate}
\end{remark}

We state a few corollaries, which is easily deduced by Proposition \ref{linear thm} and the representation \eqref{repre eq}.
\begin{corollary}
					\label{sto heat cor 1}
There exists a positive constant $C_5$ such that for all $T \in (0,\infty)$, $u_0 \in \dot U_p^{1/2}$, and $g \in \dot \bH_2^{1/2}(T,l_2)$,
\begin{align*}
&\left\|  S(t)u_0(x)  + \int_0^t S(t-s)g(s,\cdot)dw^k_s \right\|_{\dot \bH_2^{3/2}(T)}  \\
&+ \bE \sup_{0 \leq t \leq T}\left\| S(t)u_0(x)  + \int_0^t S(t-s)g(s,\cdot)dw^k_s  \right\|^p_{\dot H_2^{1/2}}  \\
&\leq C_5 \left( \|  u_0\|_{\dot U_2^{1/2}}  + \|g\|_{ \dot \bH_2^{1/2}(T,l_2)}\right).
\end{align*}
\end{corollary}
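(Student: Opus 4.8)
The plan is to deduce Corollary \ref{sto heat cor 1} directly from Proposition \ref{linear thm}, specifically from the homogeneous estimates \eqref{heat est 2} and \eqref{heat est 4}, by choosing the parameters appropriately and discarding the terms that do not appear. First I would apply the proposition with $p=2$, $n=-1/2$, and $f=0$. With these choices one has $n+2 = 3/2$, so the solution $u$ of \eqref{heat eqn} with data $u_0$ and noise coefficient $g$ lies in $\dot\bH_2^{3/2}(T)$ and, by Remark \ref{main rem}(iv)--(v), is represented by
\[
u(t,x) = S(t)u_0(x) + \int_0^t S(t-s)g(s,\cdot)\,dw^k_s,
\]
which is precisely the expression appearing in the corollary. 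The exponent bookkeeping to check is that $n+2-2/p = -1/2 + 2 - 1 = 1/2$, so the initial-data space $\dot U_p^{n+2-2/p}$ in the proposition is $\dot U_2^{1/2}$, matching the hypothesis $u_0\in\dot U_2^{1/2}$; likewise $n+1 = 1/2$, so the noise space $\dot\bH_p^{n+1}(T,l_2)$ is $\dot\bH_2^{1/2}(T,l_2)$, again matching. For the supremum term I would invoke \eqref{heat est 4} with the same $n=-1/2$, $p=2$: there $n+2-2/p = 1/2$, so the left side is $\bE\sup_{0\le t\le T}\|u(t,\cdot)\|^p_{\dot H_2^{1/2}}$ and the right side involves $\|u_0\|^p_{\dot U_2^{1/2}}$, $\|f\|^p_{\dot\bH_2^{n+1-2/p}(T)}$ (which vanishes since $f=0$), and $\|g\|^p_{\dot\bH_2^{n+2-2/p}(T,l_2)} = \|g\|^p_{\dot\bH_2^{1/2}(T,l_2)}$.

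The remaining point is the uniformity of the constant in $T$. The constant $C_2$ in \eqref{heat est 2} is already $T$-independent (it depends only on $p$, as stated in Proposition \ref{linear thm}), so the $\dot\bH_2^{3/2}(T)$-bound comes with a constant valid for every $T\in(0,\infty)$. For the supremum estimate \eqref{heat est 4}, the constant $C_4$ is likewise $p$-dependent only; alternatively, one may appeal to the Remark following the proposition, which records that the analogous constants can be taken uniformly on $[0,\bar T]$, together with the scaling argument already used in the proof of Proposition \ref{linear thm} to push this to all $T$. Then I would set $C_5$ to be the larger of $C_2$ and a power of $C_4$ (adjusting for the $p$-th power on the supremum term by using $a^{1/p}+b^{1/p} \le 2(a+b)^{1/p}$ or simply absorbing constants), combine the two displayed inequalities by addition, and drop the $f$-terms, which are zero. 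This yields exactly the claimed inequality with a single constant $C_5$.

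I do not expect a genuine obstacle here: the corollary is essentially a specialization of Proposition \ref{linear thm} with the parameters tuned so that the two homogeneous Sobolev indices collapse onto the spaces $\dot H_2^{1/2}$ and $\dot H_2^{3/2}$, and with $f$ switched off. The only mildly delicate bookkeeping is (a) confirming that the representation formula \eqref{repre eq} is legitimate for the general homogeneous data allowed here — which is exactly what Remark \ref{main rem}(iv)--(v) grants — and (b) being careful that the supremum estimate \eqref{heat est 4} carries a $T$-uniform constant, which follows from its stated form (constant depending only on $p$) or from the scaling/uniformity remarks. Combining the two estimates and renaming the constant then finishes the proof.
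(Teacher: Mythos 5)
Your proposal is correct and matches the paper's intended argument: the paper offers no separate proof, stating only that the corollaries are ``easily deduced by Proposition \ref{linear thm} and the representation \eqref{repre eq},'' which is precisely your specialization to $p=2$, $n=-1/2$, $f=0$ combined with Remark \ref{main rem}(iv)--(v) and the $T$-independence of $C_2$ and $C_4$. Your side remark about reconciling the $p$-th power on the supremum term with the unraised right-hand side is a fair observation about the (inhomogeneous) form in which the corollary is stated, and your handling of it is reasonable.
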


\begin{corollary}
					\label{sto heat cor 2}
There exists a positive constant $C_6$ such that for all $T \in (0,\infty)$, $u_0 \in \dot U_5^{-2/5}$, 
and $g \in \dot \bH_5^{-1}(T,l_2)$,
\begin{align*}
\left\|  S(t)u_0(x)+ \int_0^t S(t-s)g(s,\cdot)dw^k_s \right\|_{\dot \bL_5(T)}  
&\leq C_6 \left( \|  u_0\|_{\dot U_5^{-2/5}} + \|g\|_{ \dot \bH_5^{-1}(T,l_2)}\right).
\end{align*}
\end{corollary}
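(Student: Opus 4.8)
The plan is to deduce the claimed bound directly from Proposition \ref{linear thm}, specifically from the homogeneous estimate \eqref{heat est 2}, applied with a suitable choice of the parameters $n$ and $p$, together with the representation formula \eqref{repre eq} and the observation in Remark \ref{main rem}(v) that \eqref{heat est 2} persists for homogeneous data and the function defined through \eqref{repre eq}. First I would fix $p=5$ and choose $n$ so that the target space is $\dot\bH_5^0(T)=\dot\bL_5(T)$; that forces $n+2=0$, i.e. $n=-2$. With this choice the three data norms appearing on the right-hand side of \eqref{heat est 2} become $\|u_0\|_{\dot U_5^{n+2-2/5}}=\|u_0\|_{\dot U_5^{-2/5}}$, $\|f\|_{\dot\bH_5^{-2}(T)}$, and $\|g\|_{\dot\bH_5^{-1}(T,l_2)}$. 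Since the equation under consideration has no deterministic forcing term, I set $f=0$, and the two surviving norms are exactly the ones in the statement of the corollary. Thus the estimate for the $\dot\bL_5(T)$-norm of $S(t)u_0+\int_0^tS(t-s)g(s,\cdot)\,dw^k_s$ follows immediately, with $C_6$ the constant $C_2$ from Proposition \ref{linear thm} for this particular $p$ (note $C_2$ is independent of $T$, which is why the bound is uniform in $T\in(0,\infty)$).

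The one point that requires a small argument is that the object in \eqref{repre eq} — the mild solution built from the heat semigroup and the stochastic convolution — is genuinely the solution $u\in\bH_p^{n+2}(T)$ produced by Proposition \ref{linear thm}, so that \eqref{heat est 2} may be applied to it. For data in $\bH_c^\infty$ this identification is exactly \eqref{repre eq}; for general $u_0\in\dot U_5^{-2/5}$ and $g\in\dot\bH_5^{-1}(T,l_2)$ one approximates by elements of $\dot U_5^{-2/5}\cap U_5^{-2/5}$ and $\dot\bH_5^{-1}(T,l_2)\cap\bH_5^{-1}(T,l_2)$ respectively, applies \eqref{heat est 2} along the approximating sequence, and passes to the limit; this is precisely the content of Remark \ref{main rem}(iv)--(v), so I would simply cite it rather than redo it. One should also check that $\dot U_5^{-2/5}$ makes sense as the space of initial data at the regularity level $n+2-2/p=-2+2-2/5=-2/5$, which matches the exponent in the hypothesis, and that $g\in\dot\bH_5^{-1}(T,l_2)=\dot\bH_5^{n+1}(T,l_2)$ with $n+1=-1$, again matching.

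There is no serious obstacle here: the corollary is a bookkeeping exercise in substituting $p=5$, $n=-2$ into the already-established homogeneous maximal estimate and discarding the deterministic forcing. The only thing to be slightly careful about is the claim that the constant can be taken uniform in $T$; this is legitimate because $C_2$ in \eqref{heat est 2} depends only on $p$ (unlike $C_1$ and $C_3$, which depend on $T$), a fact already recorded in the statement of Proposition \ref{linear thm}. I would therefore present the proof in two or three lines: invoke \eqref{repre eq} and Remark \ref{main rem}(v), apply \eqref{heat est 2} with $(n,p)=(-2,5)$ and $f=0$, and read off $C_6$.
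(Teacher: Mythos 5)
Your proposal is correct and matches the paper's own (very brief) justification: the authors simply state that Corollary \ref{sto heat cor 2} is deduced from Proposition \ref{linear thm} and the representation \eqref{repre eq}, which is exactly your substitution $(n,p)=(-2,5)$, $f=0$ into \eqref{heat est 2}, with the uniformity in $T$ coming from the fact that $C_2$ depends only on $p$. Your additional care in citing Remark \ref{main rem}(iv)--(v) to identify the mild formula with the solution for general homogeneous data is the right way to make the paper's one-line claim rigorous.
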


\begin{corollary}
					\label{heat cor}
(i) There exists a positive constants $C_7$ such that for all $T \in (0,\infty)$ and $u_0 \in \dot H_2^{1/2}$,
\begin{align*}
\int_0^T\|S(t)u_0 \|^2_{ \dot H_2^{3/2}} dt \leq C_7 \|u_0\|_{\dot H_2^{1/2}}.
\end{align*}

(ii) There exists a positive constant $C_8$ such that for all $T \in (0,\infty)$ and $f \in L^2((0,T) ; \dot{H}_2^{-1/2})$,
\begin{align*}
& \sup_{ 0\leq t \leq T}\left\| \int_0^tS(t-s)f(s, \cdot)  ds  \right\|^2_{ \dot H_2^{1/2}}
+\int_0^T\left\| \int_0^tS(t-s)f(s, \cdot)  ds  \right\|^2_{ \dot H_2^{3/2}} dt \\
&\quad \leq C_8 \int_0^T\|f(t, \cdot)\|^2_{\dot H_2^{-1/2}}dt.
\end{align*}

(iii) There exists a positive constant $C_9$ such that for all $T \in (0,\infty)$ and $u_0 \in H_5^{-2/5}$,
\begin{align*}
\|S(t)u_0 \|_{L_5((0,T) \times \fR^3)} \leq C_9 \|u_0\|_{H^{-2/5}_5}.
\end{align*}
\end{corollary}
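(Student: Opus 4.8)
The plan is to reduce all three parts to the corresponding homogeneous linear estimates already recorded in Proposition \ref{linear thm} and its corollaries, by switching off the noise and (where appropriate) the forcing. For part (i), I would set $f=0$, $g=0$ in \eqref{heat eqn} so that $u(t,\cdot)=S(t)u_0$, and apply estimate \eqref{heat est 2} with $p=2$, $n+2=3/2$, i.e. $n=-1/2$; the initial-data exponent $n+2-2/p = 3/2 - 1 = 1/2$ matches, so $\|S(\cdot)u_0\|_{\dot\bH_2^{3/2}(T)}^2 \lesssim \|u_0\|_{\dot U_2^{1/2}}^2$. Since there is no randomness, $\dot\bH_2^{3/2}(T)$ collapses to $L_2((0,T);\dot H_2^{3/2})$ and $\dot U_2^{1/2}$ collapses to $\dot H_2^{1/2}$, giving exactly the claimed bound with $C_7$ independent of $T$ (the constant $C_2$ in \eqref{heat est 2} is $T$-independent). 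For part (iii), I would similarly take $u(t,\cdot)=S(t)u_0$ and invoke part (iii) of Corollary \ref{sto heat cor 1}'s sibling — more precisely the deterministic heat-smoothing estimate: $\|S(t)u_0\|_{L_5((0,T)\times\fR^3)} \lesssim \|u_0\|_{\dot H_5^{-2/5}}$, which is the noise-free, forcing-free case of Corollary \ref{sto heat cor 2} (set $g=0$ there, so $\dot\bH_5^{-1}(T,l_2)$ drops out and $\dot U_5^{-2/5}$ collapses to $\dot H_5^{-2/5}$); then use the remark that $\|\cdot\|_{\dot H_5^{-2/5}} \lesssim \|\cdot\|_{H_5^{-2/5}}$ to replace the homogeneous norm by the inhomogeneous one on the right-hand side.

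For part (ii), the cleanest route is Duhamel plus the deterministic forcing estimate \eqref{heat est 2} (again specialized to $p=2$), combined with a maximal-in-time bound. Writing $v(t,\cdot) = \int_0^t S(t-s)f(s,\cdot)\,ds$, this is the mild solution of $v_t = \Delta v + f$ with $v(0)=0$. Estimate \eqref{heat est 2} with $p=2$, $n = -1/2$ (so $n+2 = 3/2$) and $v_0=0$, $g=0$ gives
\[
\int_0^T \| v(t,\cdot)\|_{\dot H_2^{3/2}}^2\,dt \;\lesssim\; \int_0^T \|f(t,\cdot)\|_{\dot H_2^{-1/2}}^2\,dt,
\]
which is half of what is claimed. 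For the other half, the supremum in time, I would apply the deterministic analog of estimate \eqref{heat est 4} (or equivalently \eqref{heat est 3}): with $p=2$, $n+2-2/p = n+1 = 1/2$, so $n = -1/2$, and with $u_0 = 0$, $g=0$,
\[
\sup_{0\le t\le T}\|v(t,\cdot)\|_{\dot H_2^{1/2}}^2 \;\lesssim\; \|f\|_{\dot\bH_2^{n+1-2/p}(T)}^2 = \int_0^T \|f(t,\cdot)\|_{\dot H_2^{-1/2}}^2\,dt,
\]
since $n+1-2/p = -1/2$. Summing the two bounds yields part (ii) with $C_8$ independent of $T$.

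The main technical point — and the only place where a little care is needed rather than bookkeeping — is justifying that the stochastic estimates of Proposition \ref{linear thm} genuinely specialize to deterministic ones with $T$-independent constants, and that the various function-space identifications ($\dot\bH_2^m(T) = L_2((0,T);\dot H_2^m)$ when the integrand is deterministic, $\dot U_p^s = \dot H_p^s$ for deterministic data) are legitimate. These follow because, when $g \equiv 0$ and all data are $\rF_0$-measurable deterministic functions, the solution in \eqref{repre eq} is itself deterministic, and Remark after Proposition \ref{linear thm} records that $C_2$ (hence the deterministic analogue of $C_4$) can be taken uniformly in $T$; the passage from homogeneous to inhomogeneous norms in (iii) uses only the embedding noted in the Remark on equivalence of Sobolev norms. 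I expect no genuine obstacle here — the corollary is, as the text says, "easily deduced."
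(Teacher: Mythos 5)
Your overall strategy is exactly the one the paper intends: the text offers no proof of Corollary \ref{heat cor} beyond the remark that it is ``easily deduced by Proposition \ref{linear thm} and the representation \eqref{repre eq}.'' Your parts (i) and (ii) carry this out correctly: the exponent bookkeeping for \eqref{heat est 2} and \eqref{heat est 4} with $p=2$, $n=-1/2$ is right (note $n+1-2/p=n$ when $p=2$, so both halves of (ii) land on the same $\dot H_2^{-1/2}$ norm of $f$), and the constants $C_2$, $C_4$ are indeed $T$-independent, so the reduction to deterministic data is harmless.

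There is, however, a genuine gap in your part (iii), at the very last step. You derive the correct scaling-invariant bound $\|S(t)u_0\|_{L_5((0,T)\times\fR^3)}\lesssim\|u_0\|_{\dot H_5^{-2/5}}$ and then invoke ``$\|\cdot\|_{\dot H_5^{-2/5}}\lesssim\|\cdot\|_{H_5^{-2/5}}$,'' citing the paper's remark on equivalence of Sobolev norms. That remark only asserts $\|\cdot\|_{\dot H_p^n}\lesssim\|\cdot\|_{H_p^n}$ for $n>0$; here $n=-2/5<0$, and for negative smoothness the inequality goes the other way ($\|\cdot\|_{H_p^n}\lesssim\|\cdot\|_{\dot H_p^n}$, since the relevant Fourier multiplier $|2\pi\xi|^{n}(1+|2\pi\xi|^2)^{-n/2}$ is unbounded near $\xi=0$ when $n<0$). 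Concretely, taking $\widehat{u_0}$ supported in $\{|\xi|\lesssim\epsilon\}$ one finds $\|u_0\|_{\dot H_5^{-2/5}}\approx\epsilon^{-2/5}\|u_0\|_{H_5^{-2/5}}\to\infty$ relative to the inhomogeneous norm, and for $T\gtrsim\epsilon^{-2}$ the left-hand side of (iii) is comparable to the homogeneous norm; so the estimate as printed, with a $T$-uniform constant and the inhomogeneous norm on the right, cannot hold. The resolution is that the right-hand side of (iii) should read $\|u_0\|_{\dot H_5^{-2/5}}$ (consistent with Corollary \ref{sto heat cor 2}, where the data space is $\dot U_5^{-2/5}$, and with the scaling that forces $T$-uniformity); with that reading your argument terminates one step earlier and is complete. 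You should state the homogeneous version and drop the final norm comparison rather than attempt to repair it.
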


\section{Bilinear Estimates}
\label{S3}

The goal of this section is to prove bilinear estimates, Proposition \ref{bi est 1} and Proposition \ref{bi est 2}, which play key roles to obtain our existence results for stochastic MHD equations.

Before going further, we recall that if
\[
\bv \in C([0,T];\dot{H}_2^{1/2}) \cap L^2((0,T);\dot{H}_2^{3/2}),
\]
then from an interpolation inequality in the Sobolev space,
\begin{equation}
						\label{E31}
\norm{\bv}_{L^4((0,T);\dot{H}_2^{1})} \lesssim \norm{\bv}_{C([0,T];\dot{H}_2^{1/2})}^{1/2} \norm{\bv}_{L^2((0,T);\dot{H}_2^{3/2})}^{1/2}
\end{equation}
and hence
\[
C([0,T];\dot{H}_2^{1/2}) \cap L_2((0,T);\dot{H}_2^{3/2}) \subset L_4((0,T);\dot{H}_2^{1}).
\]
For the notational convenience, we set 
\[
X_1 = C([0,T]  ; \dot{H}_2^{1/2}) \cap L^2((0,T) ; \dot{H}_2^{3/2 }) 
\]
with the norm
\[
\|\cdot\|_{X_1} = \|\cdot \|_{C([0,T];\dot{H}_2^{1/2})}+ \|\cdot\|_{L_2((0,T);\dot{H}_2^{3/2})}.
\]

For the corresponding stochastic function spaces, we set 
\begin{align*}
{\bX_{1,T}} &:=  L_2 \left( \Omega, \rF ; C([0,T] ; \dot{H}_2^{1/2 }(\R)) \right) \cap \dot\bH^{3/2}_2(T), \\
{\bX_{2,T}} &:=  L_4 \left( \Omega, \rF ; L_4([0,T] ; \dot{H}_2^{1/2 }(\R)) \right) 
\end{align*}
with the norms
\begin{align*}
\|\cdot\|_{\bX_{1,T}} &= \|\cdot \|_{L_2 \left( \Omega, \rF ; C([0,T]:\dot{H}_2^{1/2}(\R)) \right)}+ \|\cdot\|_{\dot\bH^{3/2}_2(T)}, \\
\|\cdot\|_{\bX_{2,T}} &=  \left( \bE \int_0^T \norm{\cdot}_{\dot{H}_2^1}^4 dt\right)^{1/4}.
\end{align*}
Due to \eqref{E31}, we have $\bX_{1,T} \subset \bX_{2,T}$.

\begin{proposition}
					\label{bi est 1}
There exists a constant $C_{10}>0$ such that for all $T>0$ and $\bv, \bw \in {\bX_{1,T}}$,
\begin{align*}
\norm{B(\bw,\bv)}_{{\bX_{1,T}}}  
&\leq C_{10}  \|\bv \|_{\bX_{2,T}} \|\bw\|_{\bX_{2,T}}.
\end{align*}
\end{proposition}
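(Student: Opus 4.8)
The plan is to estimate the bilinear term $B(\bw,\bv)(t) = \int_0^t S(t-s)\bP\nabla\cdot(\bw\otimes\bv)(s)\,ds$ by first reducing the $\bX_{1,T}$-norm to a deterministic $L^2$-in-time-and-space estimate on the forcing $\bP\nabla\cdot(\bw\otimes\bv)$. Concretely, I would apply part (ii) of Corollary \ref{heat cor} with $f = \bP\nabla\cdot(\bw\otimes\bv)$, pathwise in $\omega$: since $\bP$ and $\nabla\cdot$ are bounded on the relevant homogeneous Sobolev scale, $\|\bP\nabla\cdot(\bw\otimes\bv)(t)\|_{\dot H_2^{-1/2}} \lesssim \|\bw\otimes\bv(t)\|_{\dot H_2^{1/2}}$, so Corollary \ref{heat cor}(ii) yields
\[
\sup_{0\le t\le T}\|B(\bw,\bv)(t)\|_{\dot H_2^{1/2}}^2 + \int_0^T\|B(\bw,\bv)(t)\|_{\dot H_2^{3/2}}^2\,dt \;\lesssim\; \int_0^T\|\bw\otimes\bv(t)\|_{\dot H_2^{1/2}}^2\,dt.
\]
Taking expectations and using $\|\cdot\|_{\bX_{1,T}} \lesssim \big(\mathbb E[\sup_t\|\cdot\|_{\dot H_2^{1/2}}^2 + \int_0^T\|\cdot\|_{\dot H_2^{3/2}}^2\,dt]\big)^{1/2}$ (after checking the continuity-in-time of $B(\bw,\bv)$, which follows from the same Corollary applied on subintervals, or is taken as part of the linear theory), it remains to bound $\mathbb E\int_0^T\|\bw\otimes\bv(t)\|_{\dot H_2^{1/2}}^2\,dt$ by $\|\bv\|_{\bX_{2,T}}^2\|\bw\|_{\bX_{2,T}}^2$.

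The heart of the matter is the product estimate in the fractional Sobolev space: I would show
\[
\|\bw\otimes\bv\|_{\dot H_2^{1/2}} \;\lesssim\; \|\bw\|_{\dot H_2^{1}}\,\|\bv\|_{\dot H_2^{1}}
\]
pointwise in $t$ (and $\omega$). In three dimensions $\dot H_2^{1/2}$ embeds into $L^3$ and $\dot H_2^{1}$ embeds into $L^6$; one standard route is via the fractional Leibniz rule (Kato--Ponce), $\|fg\|_{\dot H_2^{1/2}} \lesssim \|f\|_{L^6}\|g\|_{\dot H_3^{1/2}} + \|g\|_{L^6}\|f\|_{\dot H_3^{1/2}}$, combined with $\dot H_2^1 \hookrightarrow \dot H_3^{1/2}$ (both are $L^6$ by Sobolev embedding at the level of the homogeneous scale in $d=3$, the Sobolev exponents matching: $\dot H_2^1, \dot H_3^{1/2} \hookrightarrow L^6$) — alternatively, and perhaps more cleanly, one can use the paraproduct/Bony decomposition or simply the Sobolev multiplication law $\dot H_2^{s_1}\cdot \dot H_2^{s_2}\hookrightarrow \dot H_2^{s}$ valid when $s_1+s_2 > d/2$, $s_i \le s$, $s = s_1+s_2-d/2$; with $d=3$, $s_1=s_2=1$ this gives $s = 2 - 3/2 = 1/2$, exactly what is needed. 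I would cite or quickly record this multiplication inequality.

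Granting the pointwise product bound, the rest is Cauchy--Schwarz in $t$ and $\omega$:
\[
\mathbb E\int_0^T\|\bw\otimes\bv(t)\|_{\dot H_2^{1/2}}^2\,dt \;\lesssim\; \mathbb E\int_0^T\|\bw(t)\|_{\dot H_2^1}^2\|\bv(t)\|_{\dot H_2^1}^2\,dt \;\le\; \Big(\mathbb E\int_0^T\|\bw\|_{\dot H_2^1}^4\,dt\Big)^{1/2}\Big(\mathbb E\int_0^T\|\bv\|_{\dot H_2^1}^4\,dt\Big)^{1/2} = \|\bw\|_{\bX_{2,T}}^2\|\bv\|_{\bX_{2,T}}^2,
\]
which closes the estimate with a constant $C_{10}$ independent of $T$, as required. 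The main obstacle I anticipate is not any single inequality but making sure the chain of embeddings and the fractional Leibniz rule are stated in their correct homogeneous (scale-invariant) form so that the constant genuinely does not depend on $T$ — all the ingredients (Corollary \ref{heat cor}(ii), the Sobolev multiplication law, the embedding \eqref{E31}) are scale-invariant, so this should go through, but it is the point that requires care. A secondary technical point is justifying that $B(\bw,\bv)$ is a well-defined element of $C([0,T];\dot H_2^{1/2})$ for $\bv,\bw\in\bX_{1,T}$ (rather than merely $\bX_{2,T}$), which I would handle by a density argument reducing to smooth inputs and then passing to the limit using the estimate just proved.
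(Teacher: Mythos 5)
Your argument is correct and follows the same skeleton as the paper's proof: reduce to the deterministic maximal--regularity estimate of Corollary \ref{heat cor}(ii) applied pathwise with $f=\bP\nabla\cdot(\bw\otimes\bv)$, bound $\|f\|_{\dot H_2^{-1/2}}$ by $\|\bv\|_{\dot H_2^1}\|\bw\|_{\dot H_2^1}$, and then apply the Cauchy--Schwarz inequality in $t$ and in $\omega$. Where you genuinely diverge is in the forcing estimate. The paper keeps the derivative on the product, uses the divergence-free condition to write $\nabla\cdot(\bv\otimes\bw)=\bv\cdot\nabla\bw$, and then runs the elementary chain
\[
\norm{\bP\nabla\cdot(\bv\otimes\bw)}_{\dot H_2^{-1/2}}
\lesssim \norm{\bv\cdot\nabla\bw}_{L_{3/2}}
\lesssim \norm{\bv}_{L_6}\norm{\nabla\bw}_{L_2}
\lesssim \norm{\bv}_{\dot H_2^1}\norm{\bw}_{\dot H_2^1},
\]
relying only on H\"older, the Sobolev embedding $\dot H_2^1\hookrightarrow L_6$, the dual embedding $L_{3/2}\hookrightarrow\dot H_2^{-1/2}$, and the $L_{3/2}$-boundedness of the Riesz transforms. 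You instead peel off the full derivative abstractly ($\nabla\cdot:\dot H_2^{1/2}\to\dot H_2^{-1/2}$, with $\bP$ trivially bounded on the $L^2$-scale) and invoke the fractional product law $\dot H_2^{1}\cdot\dot H_2^{1}\hookrightarrow\dot H_2^{1/2}$ in $d=3$. This buys you an argument that never uses $\nabla\cdot\bv=0$ (and is in that sense more robust), at the cost of a heavier tool (Kato--Ponce or paraproducts) where the paper needs only H\"older and Sobolev. Two small cautions: the hypotheses you state for the multiplication law ($s_1+s_2>d/2$, $s_i\le s$) are garbled --- with $s_1=s_2=1$ and $s=1/2$ your own application violates $s_i\le s$; the correct sufficient condition is $s_1,s_2<d/2$ and $s_1+s_2>0$, which your instance does satisfy, so the estimate itself is fine. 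Also note that in the paper's route, as in yours, the constant is $T$-independent because Corollary \ref{heat cor}(ii) and every embedding used are scale-invariant; your attention to that point is well placed.
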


\begin{proof}
We begin by recalling 
\[
B(\bu,\bv)(t)=\int_0^t S(t-s) \bP \nabla \cdot (\bu \otimes \bv)(s) ds.
\]
Since the embedding $L_{3/2} \subset \dot{H}_2^{-1/2}$ is continuous and the projection operator $\bP$ is continuous on $L_{3/2}$,
we have 
\begin{align*}
\norm{\bP \nabla \cdot (\bv \otimes \bw)}_{\dot{H}_2^{-1/2}} 
\lesssim \norm{\bP \nabla \cdot (\bv \otimes \bw)}_{L_{3/2}}
\lesssim \norm{\nabla \cdot (\bv \otimes \bw)}_{L_{3/2}}. 
\end{align*}
Since $\nabla \cdot \bv=0$, we use the H\"older inequality and the Sobolev inequality to get 
\[
\norm{\nabla \cdot (\bv \otimes \bw)}_{L_{3/2}} 
\lesssim \norm{\bv \cdot \nabla \bw}_{L_{3/2}} 
\lesssim \norm{\bv}_{L_{6}} \norm{\nabla \bw}_{L_2}.
\lesssim \norm{\bv}_{\dot{H}_2^1} \norm{\bw}_{\dot{H}_2^1}
\]
Thus, we use the Cauchy--Schwarz inequality inequality to get 
\begin{equation}
							\label{2018-07-16-1}
\begin{split}
&\int_0^T \norm{\bP \nabla \cdot (\bv \otimes \bw)}_{\dot{H}_2^{-1/2}}^2 dt \\
&\lesssim \int_0^T \norm{\bv}_{\dot{H}_2^1}^2 \norm{\bw}_{\dot{H}_2^1}^2 dt  \\
&\lesssim \left(\int_0^T \norm{\bv}_{\dot{H}_2^1}^4 dt\right)^{1/2}  \left(\int_0^T \norm{\bw}_{\dot{H}_2^1}^4 dt\right)^{1/2}.
\end{split}
\end{equation}
If we put 
\[
f := \bP \nabla \cdot (\bv \otimes \bw) \in L_2((0,T):\dot{H}_2^{-1/2}(\R)),
\]
then from Corollary \ref{heat cor}(ii), 
\begin{equation}
\begin{split}
							\label{225 2}
& \sup_{ 0\leq t \leq T}\left\| \int_0^tS(t-s)f(s, \cdot)  ds  \right\|^2_{ \dot H_2^{1/2}}
+\int_0^T\left\| \int_0^tS(t-s)f(s, \cdot)  ds  \right\|^2_{ \dot H_2^{3/2}} dt \\
&\lesssim \int_0^T\|f(t, \cdot)\|^2_{\dot H_2^{-1/2}}dt.
\end{split}
\end{equation}
Combining \eqref{2018-07-16-1} and \eqref{225 2}, we obtain for all $T >0$ and $\bv,\bw \in L^4((0,T) ; \dot{H}_2^{1})$,
\begin{align*}
\norm{B(\bv,\bw)}_{X_1}^2
&\lesssim \left(\int_0^T \norm{\bv}_{\dot{H}_2^1}^4 dt\right)^{1/2}  \left(\int_0^T \norm{\bw}_{\dot{H}_2^1}^4 dt\right)^{1/2}.
\end{align*}
Thus, we use the Cauchy--Schwarz inequality inequality to obtain that 
\begin{align*}
\norm{B(\bw,\bv)}^2_{{\bX_{1,T}}} 
&\lesssim \bE \left[\left(  \int_0^T \norm{\bw}_{\dot{H}_2^1}^4 dt\right)^{1/2}  \left(  \int_0^T \norm{\bv}_{\dot{H}^1}^4 dt\right)^{1/2} \right] \\
&\lesssim \|\bv \|_{\bX_{2,T}}^2 \|\bw\|_{\bX_{2,T}}^2.
\end{align*}
\end{proof}

We now prove that the bilinear operator is jointly continuous on $\bL_5(t) \times \bL_5(t)$.
We divide its proof into a few steps.

\begin{proposition}
					\label{bi est 2}
There exists a constant $C_{11}>0$ such that for all $T>0$ and $\bv, \bw \in \bL_5(t)$,
\[
\|B(\bv,\bw)\|_{\bL_5(T)} \le C_{11} \|\bv\|_{\bL_5(T)}\|\bw\|_{\bL_5(T)}.
\]
\end{proposition}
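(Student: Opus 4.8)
The plan is to reduce the claim to a deterministic, \emph{pathwise} bilinear estimate in $L_5((0,T)\times\fR^3)$ with a constant independent of $T$, and then to integrate in $\omega$. Since
\[
B(\bv,\bw)(\omega,t,\cdot)=\int_0^t S(t-s)\,\bP\nabla\cdot(\bv\otimes\bw)(\omega,s,\cdot)\,ds
\]
is a Bochner integral computed $\omega$ by $\omega$, its $\bar\cP$-measurability follows from that of $\bv\otimes\bw$, and it suffices to prove that, for deterministic $v,w$,
\[
\|B(v,w)\|_{L_5((0,T)\times\fR^3)}\;\lesssim\;\|v\|_{L_5((0,T)\times\fR^3)}\,\|w\|_{L_5((0,T)\times\fR^3)},
\]
with an implied constant not depending on $T$; the proposition then follows by raising this inequality to the fifth power and integrating over $\Omega$.

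For the pathwise estimate I would combine the smoothing of the Stokes--heat operator with one-dimensional fractional integration. Since $\bP$ is a Fourier multiplier bounded on $L_q(\fR^3)$ for $1<q<\infty$ and $S(\tau)$ is convolution with the Gaussian $p(\tau,\cdot)$, one has
\[
\|S(\tau)\bP\nabla\cdot F\|_{L_5(\fR^3)}\;\lesssim\;\tau^{-\frac12-\frac32(\frac25-\frac15)}\|F\|_{L_{5/2}(\fR^3)}\;=\;\tau^{-4/5}\|F\|_{L_{5/2}(\fR^3)} .
\]
Applying this with $F=(v\otimes w)(s)$ and estimating $\|(v\otimes w)(s)\|_{L_{5/2}(\fR^3)}\le\|v(s)\|_{L_5(\fR^3)}\|w(s)\|_{L_5(\fR^3)}$ by Hölder gives
\[
\|B(v,w)(t)\|_{L_5(\fR^3)}\;\lesssim\;\int_0^t (t-s)^{-4/5}\,\|v(s)\|_{L_5(\fR^3)}\,\|w(s)\|_{L_5(\fR^3)}\,ds .
\]
Since $(t-s)^{-4/5}=(t-s)^{-(1-1/5)}$ is the kernel of fractional integration of order $\tfrac15$ in time, the one-dimensional Hardy--Littlewood--Sobolev inequality — which maps $L_{5/2}((0,T))$ into $L_5((0,T))$ with a constant depending only on the exponents — bounds $\|B(v,w)\|_{L_5((0,T)\times\fR^3)}$ by the $L_{5/2}((0,T))$-norm of $s\mapsto\|v(s)\|_{L_5(\fR^3)}\|w(s)\|_{L_5(\fR^3)}$, and a final Hölder inequality in $s$ (with $\tfrac25=\tfrac15+\tfrac15$) bounds the latter by $\|v\|_{L_5((0,T)\times\fR^3)}\|w\|_{L_5((0,T)\times\fR^3)}$.

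The step I expect to be the main obstacle is precisely this $T$-uniform pathwise bound, and within it the use of the Hardy--Littlewood--Sobolev inequality in place of Young's: although $(t-s)^{-4/5}$ does belong to $L_1((0,T))$, estimating the time convolution by Young's inequality only produces a constant of size $T^{1/5}$ — hence a merely local-in-time theory — so one has to cash in the genuine gain of integrability, which is what forces the intermediate exponent $5/2$ and is mirrored by the critical index in $\dot H_5^{-2/5}$ that appears in the companion linear estimate, Corollary \ref{heat cor}(iii). One must also keep the arithmetic of the smoothing exponent $-\tfrac45=-\tfrac12-\tfrac{3}{10}$ straight, the $-\tfrac12$ coming from the single derivative in $\bP\nabla\cdot$ and the $-\tfrac{3}{10}$ from the $L_{5/2}\to L_5$ heat smoothing in dimension three; altering it changes the order of the fractional integral and breaks the scaling. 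A further point to watch is the final passage to the $\bL_5(T)$-norm: because $B$ is evaluated pathwise, one integrates the pathwise estimate in $\omega$, and one should check that this remains compatible with the bilinear structure without requiring more than $L_5(\Omega)$-integrability of $\bv$ and $\bw$. Finally, as an alternative to the explicit kernel computation, one could instead deduce the Duhamel bound from Corollary \ref{heat cor}(iii) by duality together with the Christ--Kiselev lemma, passing from the non-retarded operator $\int_0^T S(t-s)(\cdot)\,ds$ to the retarded one $\int_0^t S(t-s)(\cdot)\,ds$; but the direct argument above is shorter and parallels the proof of Proposition \ref{bi est 1}.
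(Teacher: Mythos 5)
Your argument is essentially the paper's own proof: the paper likewise reduces to the pathwise bound $\|B(\bv,\bw)(t)\|_{L_5}\lesssim\int_0^t(t-s)^{-4/5}\|(\bv\otimes\bw)(s)\|_{L_{5/2}}\,ds$ and then applies the Hardy--Littlewood--Sobolev theorem on fractional integration in time followed by H\"older and an integration in $\omega$, exactly as you propose. The only cosmetic difference is that the paper produces the factor $(t-s)^{-4/5}$ by computing the combined Oseen kernel $K_{j,k,m}=\cF^{-1}\left[e^{-t|\xi|^2}|\xi|^{-2}\xi_j\xi_k\xi_m\right]$ explicitly and verifying its scaling and decay before applying Young's inequality in space, whereas you commute $\bP$ past the heat semigroup and use Riesz-transform boundedness together with Gaussian derivative estimates; both routes give the same smoothing bound.
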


\begin{proof}
\begin{enumerate}[\bf{Step} 1)]
\item
To prove this proposition, it is useful to have an integral representation 
\begin{align*}
B(\bv,\bw)(t)
&=\int_0^t S(t-s) \bP \nabla \cdot (\bv \otimes \bw)(s) ds \\
&=\int_0^t K(t-s) \ast (\bv \otimes \bw)(s) ds
\end{align*}
with a kernel $K$.

Let $h$ be a $L_{p}(\fR^3)$-valued function  defined on $(0,T)$ with $T>0$ and $p \in (1,\infty)$.
For each $s \in (0,T)$, $h(s) \in L_{p}(\fR^3)$.
That is, fixing $s$, we can regard $h(s)$  as a function defined on $\fR^3$ and
by  $h(s,y)$ we denote the value of this function at $y \in \fR^3$. 
The kernel $K$ should satisfy
\[
K(t-s,\cdot) * h(s, \cdot) = S(t-s) \bP \nabla h(s) = S(t-s) (I + \cR \otimes \cR) \nabla h (s)
\]
so that we can write its $j$-th component of $K$ as an inverse Fourier transform of the multiplier  
\[
\left(K(t,x) \right)^j =  -i\sum_{k,m \in \{1,2,3\}} \cF^{-1} \left[ e^{-t|\xi|^2} |\xi|^{-2} \xi_j \xi_k \xi_m \right](x).
\]
The key points of this kernel representation are the following scaling property
\begin{equation}
\label{E41}
K(t,x) = t^{-2} K(1,t^{-1/2}x)
\end{equation}
and the pointwise decay property 
\begin{equation}
\label{E42}
\sup_{x \in \fR^3} (1+|x|)^{4} |K(1,x)| < \infty.
\end{equation}
Ossen studied this kind of kernel and now its properties are regarded as well-known facts.
We refer the reader to \cite{Ler2009} for its generalization and other fine properties.
For reader's convenience we provide a proof of these two basic facts at Step 3.
\item
We use the Young inequality and the scaling property \eqref{E41} to obtain that 
\begin{align*}
&\norm{B(\bv,\bw)(t)}_{L^5(\fR^3)} \\
&\lesssim \int_0^t \norm{K(t-s,\cdot) * (\bv \otimes \bw)(s)}_{L_5(\fR^3)} ds \\
&\lesssim \int_0^t \norm{K(t-s,\cdot)}_{L_{5/4}(\fR^3)} \norm{(\bv \otimes \bw)(s)}_{L_{5/2}(\fR^3)} ds \\
&\lesssim \int_0^t (t-s)^{-4/5} \norm{K(1,\cdot)}_{L_{5/4}(\fR^3)} \norm{(\bv \otimes \bw)(s)}_{L_{5/2}(\fR^3)} ds.
\end{align*}
Since $\norm{K(1,\cdot)}_{L_{5/4}(\fR^3)} < \infty$ from the pointwise decay property \eqref{E42}, we use the Hardy--Littlewood--Sobolev theorem on fractional integration (cf. \cite[Theorem 6.1.3]{Grafa2009}) to obtain that 
\begin{align*}
&\int \norm{B(\bv,\bw)(t)}_{L^5(\fR^3)}^5 dt \\
&\lesssim \int_0^T \left(\int_0^t (t-s)^{-4/5} \norm{(\bv \otimes \bw)(s)}_{L_{5/2}(\fR^3)} ds\right)^5 dt \\
&\lesssim \left(\int_0^T \norm{(\bv \otimes \bw)(t)}_{L_{5/2}(\fR^3)}^{5/2} dt\right)^2.
\end{align*}
Since $\norm{\bv \otimes \bw}_{\bL_{5/2}(T)} \le \norm{\bv}_{\bL_5(T)} \norm{\bw}_{\bL_5(T)}$, we use the Cauchy--Schwarz inequality inequality to get the result.
\item
Finally, we prove \eqref{E41} and \eqref{E42}.
We denote
\[
K_{j,k,m}(t,x) = \cF^{-1}\left[e^{-t|\xi|^2} |\xi|^{-2} \xi_j \xi_k \xi_m\right].
\]
Since we have 
\[
\left(K(t,x) \right)^j = -i\sum_{k,m}K_{j,k,m}(t,x),
\]
it suffices to prove that $K_{j,k,m}(t,x)$ satisfies the same properties as \eqref{E41} and \eqref{E42}.
We notice that 
\begin{align*}
\partial_t K_{j,k,m}(t,x) 
&= - \int_{\fR^3} e^{ix \cdot \xi} e^{-t|\xi|^2} \xi_j \xi_k \xi_m d\xi \\
&= i D_x^j D_x^k D_m \int_{\fR^3} e^{ix \cdot \xi} e^{-t|\xi|^2} d\xi \\
&= i D_x^j D_x^k D_x^m (\pi/t)^{3/2} e^{-\pi^2 |x|^2/t}
\end{align*}
and $K_{j,k,m}(t,x)$ goes to $0$ as $t \to \infty$.
Thus, by the Fundamental theorem, of calculus, we have 
\begin{align*}
K_{j,k,m}(t,x) 
&= i \int_t^\infty D_x^j D_x^k D_x^m (\pi/s)^{3/2} e^{-\pi^2 |x|^2/s} ds \\
&= i \pi^{15/2} x_j x_k x_m \int_t^\infty s^{-9/2} e^{-\pi^2 |x|^2/s} ds.
\end{align*}
By the change of variables $\pi^2 |x|^2/s = a$, we get 
\begin{align*}
K_{j,k,m}(t,x) 
&= i \pi^{15/2} x_j x_k x_m \int_t^\infty s^{-7/2} e^{-\pi^2 |x|^2/s} \frac{ds}{s} \\
&= i \pi^{15/2} x_j x_k x_m |x|^{-7} \int_0^{\pi^2 |x|^2/t} a^{9/2} e^{-a} da.
\end{align*}
From this identity it is easy to check the scaling 
\[
K_{j,k,m}(t,x) = t^{-2} K_{j,k,m}(1,t^{-1/2}x),
\]
and the decay 
\[
|K_{j,k,m}(1,x)| \lesssim (1+|x|)^{-4}.
\]

\end{enumerate}
\end{proof}

\section{Proof of main theorems}

The proofs of Theorem \ref{main thm 3} and Theorem \ref{main thm 1} go along the same series of steps.
In order to write the procedures in a tidy manner, we shall prove the following abstract fixed point lemma.

We recall that if $\bX$ is a Banach space with the norm $\|\cdot \|_{\bX}$, then the product space $\bX \times \bX$ is a Banach space with the norm
\[
\| (\bv, \bw)\|_{\bX \times \bX} = \| \bv \|_{\bX} + \|\bw \|_{\bX}.
\]
We shall use the Banach fixed point theorem to the product space $\bX \times \bX$.

\begin{lemma}
				\label{cont lem}
Let $\bX$ be a Banach space with the norm $\| \cdot \|_\bX$.
Assume that $\bB(v,w)$ be a jointly continuous bilinear operator on $\bX  \times \bX$, that is, there exists a positive constant $C_\bX$ such that forall $\bv, \bw \in \bX$,
\begin{align}
					\label{e 0202 1}
\|\bB(\bv,\bw)\|_\bX \leq C_{\bX} \|\bv\|_\bX \|\bw\|_\bX.
\end{align}
If $\varepsilon < (4C_{\bX})^{-1}$,
\begin{equation}
					\label{e 0202 2}
\|\bv^1\|_{\bX} + \|\bw^1\|_\bX  \leq \varepsilon,
\end{equation}
and $(\bv^n, \bw^n)$ is a sequence in $\bX \times \bX$ satisfying 
\begin{equation}
					\label{e 0202 0}
\begin{split}
\bv^{n+1} & =  \bv^1-\bB(\bw^n,\bv^n) \\
\bw^{n+1} & =  \bw^1- \bB(\bv^n,\bw^n),
\end{split}
\end{equation}
then there exists a unique $(\bv,\bw) \in \bX  \times \bX$ satisfying 
\[
\|\bv\|_{\bX} + \|\bw\|_\bX  \leq 2\varepsilon.
\]
and 
\begin{equation}
					\label{e 0203 1}
\begin{split}
\bv & =  \bv^1-\bB(\bw,\bv) \\
\bw & =  \bw^1-\bB(\bv,\bw).
\end{split}
\end{equation}
\end{lemma}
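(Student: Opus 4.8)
The plan is to run a standard Banach fixed point argument on the product space $\bX \times \bX$, but organized so that the contraction is genuine. First I would define the map $\Phi : \bX \times \bX \to \bX \times \bX$ by
\[
\Phi(\bv, \bw) = \big(\bv^1 - \bB(\bw, \bv),\; \bw^1 - \bB(\bv, \bw)\big),
\]
and work on the closed ball $\overline{\mathcal{B}} = \{(\bv,\bw) \in \bX \times \bX : \|\bv\|_\bX + \|\bw\|_\bX \le 2\varepsilon\}$, which is a complete metric space with the inherited norm. The first step is the self-map property: for $(\bv,\bw) \in \overline{\mathcal{B}}$, using \eqref{e 0202 1} and \eqref{e 0202 2},
\[
\|\bv^1 - \bB(\bw,\bv)\|_\bX + \|\bw^1 - \bB(\bv,\bw)\|_\bX
\le \varepsilon + C_\bX\big(\|\bw\|_\bX\|\bv\|_\bX + \|\bv\|_\bX\|\bw\|_\bX\big)
\le \varepsilon + 2 C_\bX (2\varepsilon)^2 /4 \cdot 2,
\]
and one checks that $\varepsilon < (4C_\bX)^{-1}$ forces this to be $\le 2\varepsilon$ (the crude bound $\|\bv\|_\bX\|\bw\|_\bX \le \varepsilon^2$ on the ball together with $4C_\bX \varepsilon^2 \le \varepsilon$ suffices, so one should pick the estimate carefully to land under $2\varepsilon$). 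So $\Phi$ maps $\overline{\mathcal{B}}$ into itself.

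Next I would prove the contraction estimate. Given $(\bv,\bw), (\tilde\bv,\tilde\bw) \in \overline{\mathcal{B}}$, bilinearity gives
\[
\bB(\bw,\bv) - \bB(\tilde\bw,\tilde\bv) = \bB(\bw - \tilde\bw, \bv) + \bB(\tilde\bw, \bv - \tilde\bv),
\]
hence $\|\bB(\bw,\bv) - \bB(\tilde\bw,\tilde\bv)\|_\bX \le C_\bX\big(\|\bw-\tilde\bw\|_\bX \|\bv\|_\bX + \|\tilde\bw\|_\bX \|\bv - \tilde\bv\|_\bX\big)$, and similarly for the second component. Summing the two components and bounding all the $\bX$-norms of $\bv, \tilde\bv, \bw, \tilde\bw$ by $2\varepsilon$, one obtains
\[
\|\Phi(\bv,\bw) - \Phi(\tilde\bv,\tilde\bw)\|_{\bX\times\bX} \le 4 C_\bX \varepsilon \,\big(\|\bv - \tilde\bv\|_\bX + \|\bw - \tilde\bw\|_\bX\big) = 4 C_\bX \varepsilon \,\|(\bv,\bw) - (\tilde\bv,\tilde\bw)\|_{\bX\times\bX},
\]
and $4 C_\bX \varepsilon < 1$ by hypothesis, so $\Phi$ is a contraction on $\overline{\mathcal{B}}$. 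The Banach fixed point theorem then yields a unique fixed point $(\bv,\bw) \in \overline{\mathcal{B}}$, which is exactly a pair satisfying \eqref{e 0203 1} with $\|\bv\|_\bX + \|\bw\|_\bX \le 2\varepsilon$.

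Finally, to connect with the iteration \eqref{e 0202 0}: observe that $(\bv^{n+1}, \bw^{n+1}) = \Phi(\bv^n, \bw^n)$, and since $(\bv^1, \bw^1)$ lies in $\overline{\mathcal{B}}$ by \eqref{e 0202 2} and $\Phi$ preserves $\overline{\mathcal{B}}$, the whole sequence stays in $\overline{\mathcal{B}}$; the contraction property then shows $(\bv^n, \bw^n)$ is Cauchy and converges to the fixed point, so in particular the fixed point is realized as the limit of the stated iteration (this also re-proves uniqueness within $\overline{\mathcal{B}}$ directly). I do not anticipate a serious obstacle here — the only point requiring a little care is bookkeeping the constants in the self-map step so that the image genuinely lands in the radius-$2\varepsilon$ ball rather than something slightly larger; choosing to bound $\|\bv\|_\bX\|\bw\|_\bX$ by $\big(\tfrac{\|\bv\|_\bX + \|\bw\|_\bX}{2}\big)^2 \le \varepsilon^2$ and using $4C_\bX\varepsilon^2 < \varepsilon$ makes it clean.
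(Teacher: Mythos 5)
Your proposal is correct and follows essentially the same route as the paper: both are the Banach fixed point argument on $\bX\times\bX$ with the self-map bound $\varepsilon+2C_{\bX}\varepsilon^2\le 2\varepsilon$ and the contraction constant $4C_{\bX}\varepsilon<1$ obtained from the bilinear splitting $\bB(\bw,\bv)-\bB(\tilde\bw,\tilde\bv)=\bB(\bw-\tilde\bw,\bv)+\bB(\tilde\bw,\bv-\tilde\bv)$. The only cosmetic difference is that you phrase it as a contraction of the map $\Phi$ on the closed ball of radius $2\varepsilon$, whereas the paper runs the same estimates directly on consecutive iterates; your version makes the uniqueness claim within that ball slightly more explicit.
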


\begin{proof}
We note first that for all $n \in \bN$, 
\begin{align}
					\label{e 0202 5}
\|\bv^{n}\|_\bX + \|\bw^{n}\|_\bX
\leq 2\varepsilon.
\end{align}
Indeed, mathematical induction yields 
\begin{align*}
&\|\bv^{n+1}\|_\bX + \|\bw^{n+1}\|_\bX \\
&\leq \|\bv^{1}\|_\bX + \|\bw^{1}\|_\bX   
+\|\bB(\bw^n,\bv^n)\|_\bX + \|\bB(\bv^n,\bw^n)\|_\bX \\
&\leq \varepsilon + 2 C_{\bX} \|\bv^n\|_\bX \|\bw^n\|_\bX \\
&\leq \varepsilon + 2 C_{\bX} \varepsilon^2 \\
&\leq 2\varepsilon.
\end{align*}

Since $\bB$ is bilinear, we may write for $n\ge2$,
\begin{align*}
\bv^{n+1} - \bv^n
& =  -\bB(\bw^{n},\bv^{n}) +\bB(\bw^{n-1},\bv^{n-1}) \\
& =-\bB(\bw^{n}, \bv^{n}-\bv^{n-1})- \bB(\bw^{n}-\bw^{n-1},\bv^{n-1})
\end{align*}
and 
\begin{align*}
\bw^{n+1} -\bw^n
& =  -\bB(\bv^{n},\bw^{n})+\bB(\bv^{n-1},\bw^{n-1})  \\
& =-\bB(\bv^{n}, \bw^{n}-\bw^{n-1})-\bB(\bv^{n}-\bv^{n-1},\bw^{n-1}) .
\end{align*}
Thus, we have 
\begin{align*}
&\|(\bv^{n+1}, \bw^{n+1}) - (\bv^n, \bw^n)\|_{\bX \times \bX} \\
&= \|\bv^{n+1} -  \bv^n\|_{\bX}
+ \|\bw^{n+1} - \bw^n\|_{\bX} \\
& \leq 4\varepsilon C_{\bX} (\|\bv^n - \bv^{n-1}\|_{\bX} + \|\bw^n  - \bw^{n-1}\|_{\bX}) \\
&< \delta \|(\bv^n, \bw^n) - (\bv^{n-1}, \bw^{n-1})\|_{\bX \times \bX}
\end{align*}
by \eqref{e 0202 5}, where $\delta$ is a constant satisfying $4 \varepsilon C_{\bX} < \delta < 1$.

Therefore, the Banach fixed-point theorem implies that $(\bv^n , \bw^n)$ converges to a unique $(\bv , \bw) \in \bX \times \bX$ satisfying \eqref{e 0203 1}, which follows by taking the limit to \eqref{e 0202 0} and \eqref{e 0202 5}.
\end{proof}

Now, we are ready to prove Theorem \ref{main thm 1}.

\begin{proof}[\bf{Proof of Theorem \ref{main thm 1}}]
Recall 
\begin{align*}
{\bX_{1,T}} &:=  L_2 \left( \Omega, \rF ; C([0,T]:\dot{H}_2^{1/2 }(\R)) \right) \cap \dot\bH^{3/2}_2(T), \\ 
{\bX_{2,T}} &:=  L_4 \left( \Omega, \rF ; L_4([0,T]:\dot{H}_2^{1}(\R)) \right).
\end{align*}
From \eqref{E24}, Corollary \ref{sto heat cor 1}, and Proposition \ref{bi est 1}, we have 
\begin{align*}
&\|(\bv,\bw)\|_{\bX_{1,T} \times \bX_{1,T}} \\
&\lesssim \|(\bv_0,\bw_0)\|_{\dot U_p^{1/2} \times \dot U_p^{1/2}} 
+ \|(G_1,G_2)\|_{\dot \bH_2^{1/2}(T,l_2) \times \dot \bH_2^{1/2}(T,l_2)} \\
&\quad + \| \bv\|_{\bX_{2,T}}\| \bw\|_{\bX_{2,T}}.
\end{align*}
Thus, it suffices to find a  solution $(\bv,\bw) \in \bX_{2,T} \times \bX_{2,T}$.

We set 
\begin{align*}
&\bv^1(t,x) = S(t)\bv_0(x) + \int_0^t S(t-s)G_1(s,\cdot)(x)dw^k_s\\
&\bw^1(t,x) =  S(t)\bw_0(x) +\int_0^t S(t-s)G_2(s,\cdot)(x)dw^k_s 
\end{align*}
and inductively define $(\bv^n, \bw^n)$ for $n \ge 2$ by 
\begin{align*}
\bv^{n}(t,x) 
= S(t)\bw_0(x) - B(\bw^{n-1},\bv^{n-1})
+ \int_0^t S(t-s)G_1(s,\cdot)dw^k_s \\
\bw^{n}(t,x) 
= S(t)\bv_0(x) - B(\bv^{n-1},\bw^{n-1})
 + \int_0^t S(t-s) G_2(s,\cdot)dw^k_s.
\end{align*}

By mathematical induction, we have for all $n \in \bN$, 
\begin{align}
					\label{2018 0323 1}
\bv^{n}, \bw^{n} \in \bX_{2,T}.
\end{align}
Indeed, due to \eqref{E31} and Corollary \ref{sto heat cor 1}, we have $\bv^{1}, \bw^1 \in \bX_{2,T}$.
If we assume that \eqref{2018 0323 1} holds with $n=k$, then by  \eqref{E31}, Corollary \ref{sto heat cor 1}, and Proposition \ref{bi est 1},
\begin{align*}
&\|(\bv^{k+1},\bw^{k+1})\|_{\bX_{2,T} \times \bX_{2,T}} \\
&\lesssim \|(\bv_0,\bw_0)\|_{\dot U_p^{1/2} \times \dot U_p^{1/2}} 
+ \|(G_1,G_2)\|_{\dot \bH_2^{1/2}(T,l_2) \times \dot \bH_2^{1/2}(T,l_2)} 
+ \|\bv^k\|_{\bX_{2,T}}\| \bw^k\|_{\bX_{2,T}} \\
&< \infty.
\end{align*}
This proves \eqref{2018 0323 1}.

Finally, we can apply Lemma \ref{cont lem} due to \eqref{E31}, Corollary \ref{sto heat cor 1} and Proposition \ref{bi est 1} so that we get the second part, Theorem \ref{main thm 1}(ii).

To obtain the first part, Theorem \ref{main thm 1}(i), it suffices to show that 
there exists a positive constant $T_0 \in (0,\infty)$ such that
\begin{align*}
\|v^1\|_{\bX_{2,T_0}} + \|w^1\|_{\bX_{2,T_0}}  < \frac{1}{ 4 C_{10}},
\end{align*}
which is an immediate consequence of the fact that 
\[
\lim_{T \to 0} \|v^1\|_{\bX_{2,T}} + \|w^1\|_{\bX_{2,T}} = 0.
\]
This completes the proof of Theorem \ref{main thm 1}.
\end{proof}

The proof of Theorem \ref{main thm 3} is almost identical to the proof of Theorem \ref{main thm 1} except that Corollary \ref{sto heat cor 2} and Proposition \ref{bi est 2} are used in place of Corollary \ref{sto heat cor 1} and Proposition \ref{bi est 1}.

\section*{Acknowledgements}

I. Kim's work was supported by the National Research Foundation of Korea(NRF) grant  funded by the Korea government(MSIT) (No. 2017R1C1B1002830).
M. Yang's work was partially supported by the National Research Foundation of Korea(NRF) grant  funded by the Korea government(MSIT) (No. 2015R1A5A1009350) and by the Yonsei University Research Fund (No. 2018-22-0046).

%\section*{References}

\end{document}